\documentclass[10pt]{amsart}

\usepackage{amsmath, amscd, amssymb}
\usepackage[frame,cmtip,arrow,matrix,line,graph,curve]{xy}
\usepackage{graphpap, color}
\usepackage[mathscr]{eucal}
\usepackage{cancel}
\usepackage{verbatim}

\numberwithin{equation}{section}

\newcommand{\CC}{\mathbb{C}}

\newcommand{\RR}{\mathbb{R}}
\newcommand{\ZZ}{\mathbb{Z}}


\newcommand{\bm}{\mathbf{m}}

\newcommand{\bp}{\mathbf{p}}

\newcommand{\cal}{\mathcal}

\def\cC{{\cal C}}

\def\cE{{\cal E}}
\def\cF{{\cal F}}

\def\cO{{\cal O}}

\def\cV{{\cal V}}

\def\cY{{\cal Y}}

\def\cX{\mathcal{X} }




\def\and{\quad{\rm and}\quad}



\DeclareMathOperator{\id}{id}

\DeclareMathOperator{\rank}{rank}



\newtheorem{prop}{Proposition}[section]
\newtheorem{theo}[prop]{Theorem}
\newtheorem{lemm}[prop]{Lemma}
\newtheorem{coro}[prop]{Corollary}
\newtheorem{rema}[prop]{Remark}

\newtheorem{ques}[prop]{Question}
\newtheorem{defi}[prop]{Definition}

\def\beq{\begin{equation}}
\def\eeq{\end{equation}}



\def\CC{\mathbb{C}}

\def\cO{\mathcal{O}}

\def\Cpx{\mathrm{Cpx}}

\def\Exp{\mathrm{Exp}}
\def\Log{\mathrm{Log}}
\def\log{\mathrm{log}}

\def\an{\mathrm{an}}

\def\GL{\mathrm{GL}}

\title[Vector bundles on elliptic surfaces and logarithmic transforms]{Vector bundles on elliptic surfaces \\ and logarithmic transformations}

\author{Ludmil Katzarkov}

\address{
University of Miami, Department of Mathematics, PO Box 249085, Coral Gables, FL 33124, USA; Institute of Mathematics and Informatics, Bulgarian Academy of Sciences, Acad. G. Bonchev Str. bl. 8, 1113, Sofia, Bulgaria; and National Research University Higher School of Economics, Laboratory for Mirror Symmetry, 6 Usacheva str., Moscow 119048, Russia}

\email{l.katzarkov@miami.edu}

\author{Kyoung-Seog Lee}

\address{Institute of the Mathematical Sciences of the Americas, University of Miami, Department of Mathematics, 1365 Memorial Drive, Ungar 515, Coral Gables, FL 33146}

\email{kyoungseog02@gmail.com}

\thanks{}

\begin{document}

\begin{abstract}
In this paper, we study how certain vector bundles on an elliptic surface are changed under logarithmic transformations.
\end{abstract}

\maketitle

\section{Introduction}

Logarithmic transformation is an operation on an elliptic surface introduced by Kodaira in 1960s. It has been intensively used in many branches of mathematics, e.g. algebraic geometry, gauge theory, topology to name a few. It is well-known that an elliptic surface with multiple fibers can be obtained by performing logarithmic transformations on an elliptic surface without multiple fibers. Logarithmic transformation is an analytic operation and sometimes it makes an algebraic surface into a non-algebraic one. Therefore, it is hard to follow how algebro-geometric information changes under logarithmic transformation in general. \\ 

In this paper, we study how certain vector bundles on elliptic surfaces are changed under logarithmic transformation. To be more precise, let $S \to C$ be an elliptic surface and let $S(m_1,\cdots,m_k)$ be the elliptic surface obtained by performing logarithmic transformations of order $m_1, \cdots, m_k$ along the smooth fibers over $p_1, \cdots, p_k \in C.$ These data induce natural parabolic structures on $C.$ Let $\cV^{\mathrm{vBun}}_{S(m_1, \cdots, m_k)}$ (resp. $\cV^{\mathrm{ss-vBun}}_{S(m_1, \cdots, m_k)}$) be the category of (resp. $H$-semistable) vertical bundles $\cE$ on $S(m_1, \cdots, m_k)$ such that $c_2(\cE)=0$ where $H$ is a certain ample line bundle giving a good metric (cf. \cite{Bauer}). On the base curve, let $\cV^{\mathrm{ParBun}}_{(C,\bp,\bm)}$ (resp. $\cV^{\mathrm{ss-ParBun}}_{(C,\bp,\bm)}$) be the category of (resp. semistable) 
parabolic bundles on $C$ with parabolic structures on the point $p_i$ with weight at $p_i$ belongs to $\frac{1}{m_i} \ZZ \cap [0,1).$ Bauer studied $\cV^{\mathrm{ss-vBun}}_{S(m_1, \cdots, m_k)}$ in terms of $\cV^{\mathrm{ss-ParBun}}_{(C,\bp,\bm)}$ in \cite{Bauer}. Varma generalized Bauer's work to the semistable vertical Higgs bundles with $c_2=0$ in \cite{Varma}. Their works imply the following.

\begin{theo}[Bauer, Varma]
Let $C$ be a smooth projective curve with $g \geq 2$ and $\pi : S \to C$ be an elliptic surface. Then we have the following equivalence.
$$ \cV^{\mathrm{ss-vBun}}_{S(m_1, \cdots, m_k)} \simeq \cV^{\mathrm{ss-ParBun}}_{(C,\bp,\bm)} $$
We have a similar equivalence for semistable vertical Higgs bundles on $S(m_1, \cdots, m_k)$ and semistable parabolic Higgs bundles on $(C, \bp, \bm).$

\end{theo}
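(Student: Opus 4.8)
The plan is to establish the equivalence in two stages, first relating vertical bundles on the elliptic surface to bundles on the base curve pulled back and twisted, and then bringing in the parabolic structure forced by the multiple fibers. Concretely, suppose $\pi : S \to C$ is an elliptic surface without multiple fibers and $S(\bm) \to C$ its logarithmic transform along the fibers over $p_1,\dots,p_k$. A vertical bundle $\cE$ on $S(\bm)$ is, by definition, one whose restriction to a general fiber is trivial (equivalently, degree $0$ and semistable of slope $0$ on each smooth fiber). The first step is to recall, following the Atiyah–Bott–type analysis for elliptic fibrations, that pushing forward such an $\cE$ along $\pi$ gives a coherent sheaf $\pi_*\cE$ on $C$ together with a natural parabolic structure at each $p_i$, where the weights come from the monodromy of $\cE$ around the multiple fiber of multiplicity $m_i$ (this is precisely where the condition ``weights in $\tfrac{1}{m_i}\ZZ\cap[0,1)$'' originates). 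Conversely, given a parabolic bundle on $(C,\bp,\bm)$, one uses the orbifold/root-stack picture: $S(\bm)$ is the relative compactified Jacobian (or a fiberwise quotient) of an orbifold elliptic surface over the root stack $\sqrt[\bm]{C}$, and a parabolic bundle on $(C,\bp,\bm)$ is the same as an orbifold bundle on $\sqrt[\bm]{C}$, which one pulls back and twists to produce a vertical bundle on $S(\bm)$. One checks these two constructions are mutually inverse on objects and functorial on morphisms, giving $\cV^{\mathrm{vBun}}_{S(\bm)} \simeq \cV^{\mathrm{ParBun}}_{(C,\bp,\bm)}$ in the absence of the stability conditions.

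The second step is to match the stability conditions. Here one invokes Bauer's choice of the ample class $H$ giving a ``good metric'': the point of this choice is that $H$-Mehta–Ramanathan (or Bogomolov–Gieseker) slope stability for vertical bundles on $S(\bm)$ translates, under the correspondence of Step~1, exactly into parabolic slope stability for the parabolic weights on $(C,\bp,\bm)$. The $c_2(\cE)=0$ condition is what pins the corresponding parabolic bundle to have parabolic degree $0$ (the $c_2$ contribution of the vertical twist is absorbed into the parabolic weights), so the numerical bookkeeping of ``$\mathrm{pardeg}/\mathrm{rank}$'' on the curve side exactly reproduces $\mu_H$ on the surface side. Since the functor of Step~1 is already an equivalence of the ambient categories, restricting it to the semistable subcategories on both sides yields the desired $\cV^{\mathrm{ss-vBun}}_{S(\bm)} \simeq \cV^{\mathrm{ss-ParBun}}_{(C,\bp,\bm)}$; this is essentially Bauer's theorem, and I would cite \cite{Bauer} for the slope-matching computation rather than redo it.

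The third step is the Higgs upgrade, which follows Varma \cite{Varma}. A vertical Higgs bundle on $S(\bm)$ is a pair $(\cE,\phi)$ with $\phi : \cE \to \cE \otimes \Omega^1_{S(\bm)}$ (or the appropriate log/vertical version), and one checks that under $\pi_*$ the Higgs field descends to a strongly parabolic Higgs field $\Phi : \cE_C \to \cE_C \otimes \Omega^1_C(\bp)$ compatible with the flag filtrations at the $p_i$ — the ``strongly parabolic'' (nilpotent residue) condition being forced by the vertical direction of $\phi$ collapsing over the multiple fibers. One then argues that the equivalence of Step~1 extends to the Higgs data, and that Higgs-stability on $S(\bm)$ corresponds to parabolic-Higgs-stability on $(C,\bp,\bm)$ exactly as in the non-Higgs case, since the destabilizing subobjects that matter are $\phi$-invariant vertical subbundles, which correspond to $\Phi$-invariant parabolic subbundles.

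I expect the main obstacle to be Step~1's converse direction done carefully — namely, constructing a vertical bundle on $S(\bm)$ from a parabolic bundle on $(C,\bp,\bm)$ in a way that (a) is manifestly functorial and inverts the pushforward, and (b) correctly accounts for the fact that $S(\bm)$ may be non-algebraic, so one cannot naively invoke projective GIT moduli arguments and must instead work analytically or via the orbifold elliptic surface and its relative automorphisms. The subtlety is that the logarithmic transformation changes the fiber over $p_i$ to a multiple fiber $m_i F_i$, and a vertical bundle there is not simply a pullback from $C$ but carries a nontrivial $\ZZ/m_i$-gerbe/monodromy datum; reconciling the bookkeeping of this local monodromy with the global parabolic weights — and checking it is independent of the chosen local trivializations — is where the real work lies. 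Once that local-to-global matching is set up cleanly, the stability comparisons in Steps~2 and~3 are, as indicated, the content of \cite{Bauer} and \cite{Varma}.
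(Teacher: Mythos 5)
This theorem is not proved in the paper at all: it is quoted as a result of Bauer and Varma, whose functors $\pi^{P}_*$ (pushforward with parabolic structure coming from the local monodromy at the multiple fibers) and $\pi_{P}^*$ are exactly the pair you describe, so your sketch is essentially the same approach as the paper's, namely deferring the substantive slope-matching and Higgs arguments to \cite{Bauer} and \cite{Varma}. One caveat: your intermediate claim of an unconditional equivalence $\cV^{\mathrm{vBun}}_{S(m_1,\cdots,m_k)} \simeq \cV^{\mathrm{ParBun}}_{(C,\bp,\bm)}$ before imposing stability is stronger than what the references (or the paper) assert --- Bauer's equivalence is stated only for the $H$-semistable, $c_2=0$ objects, and the paper merely notes that $\pi_P^*$ is defined on all parabolic bundles --- so the semistability comparison has to be built into the correspondence itself rather than obtained by restricting a pre-existing equivalence of ambient categories; also note that ``vertical'' in the paper means filtered by line bundles of vertical divisors, not merely trivial on a general fiber.
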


On the other hand, Zube constructed mappings between certain class of vector bundles on an elliptic surface and its logarithmic transformations in \cite{Zube}. To be more precise, let $\pi : S \to C$ be an elliptic surface and let $S(m)$ be the elliptic surface obtained by performing a logarithmic transformation of order $m$ along a smooth fiber over $p \in C.$ When we restrict his mapping to $\cV^{\mathrm{vBun}}_{S}$ and $\cV^{\mathrm{vBun}}_{S(m)},$ his result can be stated as follows.

\begin{theo}[Zube]
There are mappings $\Exp : \cV^{\mathrm{vBun}}_{S(m)} \to \cV^{\mathrm{vBun}}_{S}$ and $\Log : \cV^{\mathrm{vBun}}_{S} \to \cV^{\mathrm{vBun}}_{S(m)}.$
\end{theo}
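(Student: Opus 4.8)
The plan is to build $\Exp$ and $\Log$ by transporting bundles across the logarithmic transformation via the natural birational correspondence it induces away from the modified fiber. Recall that the logarithmic transformation of order $m$ along the smooth fiber $F = \pi^{-1}(p)$ replaces a neighborhood $\pi^{-1}(\Delta)$ of $F$ (where $\Delta \subset C$ is a small disk around $p$) by gluing in a new piece in which the central fiber becomes a multiple fiber $mF'$; crucially, the two elliptic surfaces $S$ and $S(m)$ are canonically isomorphic over $C \setminus \{p\}$, i.e. $S \setminus F \cong S(m) \setminus F'$ as elliptic surfaces over $C \setminus \{p\}$. So given a vertical bundle $\cE$ on $S(m)$ with $c_2(\cE) = 0$, the first step is to restrict it to $S(m) \setminus F'$, transport it to $S \setminus F$, and then take the canonical extension across the missing fiber $F$. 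The key point here is that a vertical bundle restricted to a neighborhood of a smooth fiber, minus that fiber, has an essentially unique ``minimal'' or ``canonical'' extension over the fiber; this is because vertical bundles with $c_2 = 0$ restrict to semistable degree-zero bundles on the fibers, and the Fourier–Mukai/relative-Jacobian description pins down the extension. This defines $\Exp(\cE)$ as a vertical bundle on $S$, and one checks $c_2(\Exp(\cE)) = 0$ since nothing is added away from a fiber and the canonical extension across a fiber does not change $c_2$.

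**The reverse map and the local model.** For $\Log : \cV^{\mathrm{vBun}}_S \to \cV^{\mathrm{vBun}}_{S(m)}$ I would do the symmetric construction: restrict a vertical bundle $\cF$ on $S$ to $S \setminus F$, transport to $S(m) \setminus F'$, and take the canonical extension across $F'$. The subtlety, and where I expect the real work to be, is the local analysis near the multiple fiber $mF'$: near $F'$ the surface $S(m)$ looks like the quotient of $D \times E$ (a disk times an elliptic curve) by a $\ZZ/m$-action, and a vertical bundle there corresponds to a $\ZZ/m$-equivariant family of semistable degree-zero bundles on $E$. One must show that the bundle transported from $S \setminus F$ extends across $F'$ as a genuine vector bundle (not merely a reflexive or torsion-free sheaf) with the right behavior — in particular that the monodromy of the flat family of degree-zero line bundles (or their Fourier–Mukai transforms, i.e. points of the relative Jacobian) around $p$ is compatible with the $m$-fold base change implicit in the logarithmic transformation. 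Concretely, a vertical line bundle on $S$ corresponds to a section of the relative Jacobian over $C$, and the multiple fiber forces this section to pass through an $m$-torsion point; one needs the monodromy/degree bookkeeping to close up. This is the analogue of the parabolic-weight constraint (weights in $\frac{1}{m}\ZZ \cap [0,1)$) appearing in the Bauer–Varma picture, and it is exactly the local obstruction that must be verified.

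**Well-definedness and functoriality.** Having constructed both maps on objects, I would then check they send bundles to bundles of the required type — vertical, with $c_2 = 0$ — which follows because the operations are isomorphisms over the complement of a single fiber and the canonical extension across a fiber is both locally free and $c_2$-preserving by the local model above. If one wants $\Exp$ and $\Log$ to be functors (Zube's formulation as ``mappings'' may only require the object-level statement), one extends them to morphisms by restricting a morphism of bundles to the complement of the fiber, transporting it, and observing that a morphism between the canonical extensions is uniquely determined by its restriction to the fiber complement (using that the extensions are minimal and that $\Hom$ sheaves are reflexive, so sections extend uniquely across a codimension-one locus when the extensions are chosen canonically). The main obstacle, to reiterate, is the local study at the multiple fiber: proving that the transported bundle extends as a locally free sheaf across $F'$ and identifying precisely which bundles on $S$ arise this way — I would handle it by passing to the $\ZZ/m$-cover $D \times E \to S(m)|_{\text{near } F'}$, pulling the bundle back, extending it equivariantly there (where the geometry is a trivial product), and descending; the descent is automatic once the equivariant structure is controlled, and the equivariant extension is controlled by the Fourier–Mukai transform turning the family-of-bundles problem into a family-of-points-in-a-Jacobian problem, which extends trivially.
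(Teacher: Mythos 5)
The skeleton of your construction (identify $S$ and $S(m)$ away from the fiber over $p$, transport the bundle, and do the real work locally near the removed/multiple fiber, possibly after passing to the $\ZZ/m$-cover) matches the outline of Zube's construction that the paper reviews. But the step carrying all the weight is wrong as stated: the extension of the transported bundle across the missing fiber is not ``essentially unique,'' and the Fourier--Mukai/relative-Jacobian data you invoke does not pin it down. Already for line bundles, $\cO_S$ and $\cO_S(f_p)=\pi^*\cO_C(p)$ are both vertical with $c_2=0$, are isomorphic on $S\setminus f_p$, and induce the same (zero) section of the relative Jacobian; so ``the'' canonical extension does not exist, and any construction must make, and justify, a specific choice. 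On the $S(m)$ side the ambiguity is even more pronounced and is precisely why Zube obtains $m$ distinct functors $\Log_i$, with $\Log_i(\cO)=\cO(i f_p)$ differing by twists by the multiple fiber, while $\Exp(\cO(f_p))=\cO$; a uniqueness claim of the kind you make would contradict this. Since your well-definedness argument for both $\Exp$ and $\Log$ rests on this false canonicity, the proposal does not yet define the mappings.

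What the paper (following Zube) does instead is exactly the missing local specification: near $f_p$ a bundle in the relevant category is shown to be given by a factor of automorphy $e_{n_1,n_2}(x)$ independent of the fiber coordinate $z$ (Zube's Lemma 1.2), and $\Exp$, $\Log$ are defined by explicit transformations of these factors (e.g.\ $v(y,z)=e(y^m,z)$), which simultaneously produces a concrete locally free extension compatible with the gluing isomorphism $\Lambda$ over the punctured region and makes the $m$-fold ambiguity explicit. Your idea of pulling back to the cover $\widetilde{\Delta}\times E$, extending equivariantly, and descending is close in spirit, but the phrase ``the equivariant extension is controlled by the Fourier--Mukai transform'' is where the actual content lies: the family-of-points-in-the-Jacobian picture forgets exactly the data (twists by powers of the central/multiple fiber and the choice of equivariant structure) that distinguishes the possible extensions, so the control has to come from something like Zube's normal form for the factor of automorphy rather than from the Jacobian alone.
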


Zube's constructions are defined using complex analytic geometry. Then we have the following natural questions.

\begin{ques}
(1) Can we understand the above mappings in terms of algebraic geometry? \\
(2) Can we describe the above mappings in terms of the base curve $C?$ 
\end{ques}

We study how $\Exp$ and $\Log$ can be understood in terms of base curve $C.$ We show that there is the following commutative diagram.

\begin{theo}
There is the following commutative diagram where the bottom arrow is forgetful map.
\begin{displaymath}
\xymatrix{ 
\cV^{\mathrm{vBun}}_{S(m)} ~~ \ar[r]^{\Exp} & \cV^{\mathrm{vBun}}_{S} \\
\cV^{\mathrm{ParBun}}_{(C,\bp,\bm)} ~~~ \ar[r]_{\mathrm{forgetful}} \ar[u]^{\pi_P^*} & \cV^{\mathrm{Bun}}_{C} \ar[u]_{\pi_P^*} }
\end{displaymath}
\end{theo}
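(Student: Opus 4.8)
The plan is to localise the whole statement around the point $p\in C$. Away from $p$ nothing happens: the logarithmic transformation only modifies $S$ over an arbitrarily small disc, and all three functors $\Exp$, $\pi_P^*$ and $\mathrm{forgetful}$ restrict over the complement of $p$ to the identity (respectively to ordinary pullback along $\pi$). So fix a disc $\Delta\subset C$ centred at $p$ over which $\pi$ is a product $\pi^{-1}(\Delta)\cong\Delta\times E_0$, with $E_0$ the fibre over $p$, and let $\rho:\tilde\Delta\to\Delta$, $\tilde t\mapsto\tilde t^{\,m}$, be the $m$-sheeted cyclic cover with Galois group $\mu_m$. By the construction of the logarithmic transformation of order $m$, the smooth surface $S':=S\times_C\tilde\Delta\cong\tilde\Delta\times E_0$ carries two $\mu_m$-actions sharing the same rotation on $\tilde\Delta$: the \emph{twisted} one, also translating $E_0$ by a point $e_0\in E_0$ of exact order $m$, whose quotient is $S(m)|_\Delta$, and the \emph{untwisted} one, trivial on $E_0$, whose quotient is $S|_\Delta\cong\Delta\times E_0$; in addition there is a canonical identification $S(m)\setminus\pi_m^{-1}(p)\cong S\setminus\pi^{-1}(p)=:U$. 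Thus a vertical bundle with $c_2=0$ near $p$, pulled back to $S'$, becomes a $\mu_m$-equivariant bundle on $S'$ for the untwisted action (on $S$) or for the twisted action (on $S(m)$).

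The second ingredient is the local shape of $\Exp$, which I take from the algebraic reinterpretation of Zube's map established earlier in the paper (e.g.\ via a relative Fourier--Mukai transform): over $\Delta$, $\Exp$ pulls a vertical $c_2=0$ bundle back to $S'$, replaces the twisted $\mu_m$-equivariant structure by the untwisted one, and descends to $S|_\Delta$, while over $U$ it is the identity under the fixed identification. The replacement step is legitimate precisely because $c_2=0$ together with verticality force each fibrewise restriction of the bundle to be isomorphic to its translate by $e_0$, by Atiyah's classification of degree-zero bundles on the elliptic curve $E_0$, so that the translation component of the twisted action can be absorbed.

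For the parabolic side, the Biswas correspondence presents a parabolic bundle $E_\bullet\in\cV^{\mathrm{ParBun}}_{(C,\bp,\bm)}$, near $p$, as a $\mu_m$-equivariant bundle $\tilde E$ on $\tilde\Delta$ glued over $U$ to its underlying bundle $E=\mathrm{forgetful}(E_\bullet)$, with $E|_\Delta\cong(\rho_*\tilde E)^{\mu_m}$ and the weights and filtration recovered from the $\mu_m$-action on $\tilde E|_0$; by definition $\pi_P^*(E_\bullet)$ is, over $\Delta$, the twisted $\mu_m$-descent of $\mathrm{pr}_{\tilde\Delta}^*\tilde E$ to $S(m)|_\Delta=S'/\mu_m$, and over $U$ it is $\pi^*E$. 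Now set $\cF:=\pi_P^*(E_\bullet)$. Over $U$ one has $\Exp(\cF)|_U=\cF|_U=\pi^*E|_U=\pi_P^*(\mathrm{forgetful}(E_\bullet))|_U$. Over $\Delta$, the pullback of $\cF$ to $S'$ is $\mathrm{pr}_{\tilde\Delta}^*\tilde E$, which is constant in the $E_0$-direction; hence translation by $e_0$ acts trivially on it, the twisted and untwisted $\mu_m$-structures literally agree, and the second ingredient says $\Exp$ simply re-descends it along $S'\to S|_\Delta$, i.e.\ along $\rho\times\mathrm{id}_{E_0}$, yielding $\mathrm{pr}_\Delta^*\bigl((\rho_*\tilde E)^{\mu_m}\bigr)=\mathrm{pr}_\Delta^*(E|_\Delta)=\pi^*E|_{\pi^{-1}(\Delta)}$. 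The two local identifications agree on $U\cap\pi^{-1}(\Delta)$ because both the identification of $U$ and the Biswas correspondence are compatible with restriction; hence $\Exp(\pi_P^*(E_\bullet))\cong\pi^*E=\pi_P^*(\mathrm{forgetful}(E_\bullet))$, naturally in $E_\bullet$, which is the commutativity of the diagram.

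I expect the main obstacle to be exactly the input quoted in the second paragraph: since Zube defines $\Exp$ analytically, one has to establish — or extract from the paper's earlier algebraic description — the sharp statement that on a vertical $c_2=0$ bundle whose pullback to $S'$ is constant along the elliptic fibres, $\Exp$ is precisely $\mu_m$-descent along $S'\to S|_\Delta$ followed by $\mathrm{pr}_\Delta^*$; equivalently, that $\Exp$ commutes with pullback along the elliptic direction, so that no stray twist by a line bundle from $C$ appears. The remaining points — the Biswas identity $(\rho_*\tilde E)^{\mu_m}\cong\mathrm{forgetful}(E_\bullet)$, the compatibility of $\Exp$ with the identification of $U$, and the gluing of the two local pictures — are routine, and it is in that first point, and not in the gluing, that the hypotheses $c_2=0$ and verticality are genuinely used.
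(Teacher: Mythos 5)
Your proposal is correct and follows essentially the same route as the paper: both arguments localize near $p$, use the cyclic cover $S(m)_{\widetilde{\Delta}}$ with its two $\ZZ/m$-actions (quotients $S_{\Delta}$ and $S(m)_{\Delta}$), write $\pi_P^*$ of a parabolic bundle locally, via the Biswas correspondence, as twisted descent of a bundle pulled back from $\widetilde{\Delta}$, and then observe that such a bundle is constant along the fibres, so the twisted and untwisted equivariant structures coincide and $\Exp$ becomes invariant descent to $\Delta$, i.e.\ pullback of the underlying (forgotten) bundle. The only cosmetic differences are that the paper packages the local picture through a global ramified cover $\widetilde{C}\to C$ with $\widetilde{S(m)}\to S(m)$ \'etale and appeals to GAGA for the quotient stack, whereas you glue the local computation to the identity away from $p$; the key input you flag --- that on $z$-independent factors of automorphy $\Exp$ is exactly the switch of equivariant structure followed by descent --- is precisely Zube's construction as reviewed in the paper, not a separate Fourier--Mukai statement to be established.
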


Biswas studied parabolic bundles on $(C, \bp, \bm)$ in terms of orbifold $\cC.$ Let $c : \cC \to C$ be the coarse moduli map (cf \cite{Olsson}). He proved that the category of parabolic bundles on $(C, \bp, \bm)$ is equivalent to the category of bundles on $\cC$ in \cite{Biswas}. Then, we have the following result.

\begin{theo}
There is the following commutative diagram where $c : \cC \to C$ is the coarse moduli map.
\begin{displaymath}
\xymatrix{ 
\cV^{\mathrm{vBun}}_{S} \ar[r]^{\Log} & \cV^{\mathrm{vBun}}_{S(m)} \\
\cV^{\mathrm{Bun}}_{C} \ar[r]_{c^*} \ar[u]^{\pi_P^*} & \cV^{\mathrm{ParBun}}_{(C,\bp,\bm)} \ar[u]_{\pi_P^*} }
\end{displaymath}
\end{theo}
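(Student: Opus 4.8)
The plan is to unwind both composites in the square and identify them on an elliptic surface built from the base curve.  Recall that a logarithmic transformation of order $m$ along the fiber over $p$ replaces a neighborhood of that fiber by a model in which the multiple fiber appears with multiplicity $m$; on the level of the base, the natural object is the orbifold $\cC$ with a $\ZZ/m$ stacky point over $p$ (more generally a $\ZZ/m_i$ point over each $p_i$), and Biswas's theorem gives an equivalence between $\cV^{\mathrm{ParBun}}_{(C,\bp,\bm)}$ and $\cV^{\mathrm{Bun}}_{\cC}$, under which the forgetful functor $\cV^{\mathrm{ParBun}}_{(C,\bp,\bm)} \to \cV^{\mathrm{Bun}}_{C}$ becomes pushforward along $c : \cC \to C$, while its left adjoint $c^* : \cV^{\mathrm{Bun}}_{C} \to \cV^{\mathrm{Bun}}_{\cC}$ corresponds to the functor $c^*$ in the statement (giving a vector bundle on $C$ the trivial parabolic structure).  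First I would therefore reinterpret the claimed square as the assertion that, after pulling back a bundle $\cE$ on $C$ to $S$ via $\pi_P^*$ and applying $\Log$, one obtains the bundle on $S(m)$ obtained by giving $\cE$ the trivial parabolic structure and then applying $\pi_P^*$ on the $S(m)$ side.

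The key geometric input is that $S(m)$ itself can be described as (the pullback of, or a fiber product with) the orbifold elliptic surface over $\cC$: there is an elliptic fibration $\cS \to \cC$ whose coarse space is $S(m)$, and $\pi_P^*$ on $S(m)$ factors through pullback to $\cS$ followed by descent to $S(m)$.  Granting this, the second step is to compare two descriptions of $\Log$ near the transformed fiber.  Zube's $\Log$ is defined analytically by a gluing construction away from the fiber over $p$ together with a prescribed local model over a punctured disc around $p$; I would check that for a vertical bundle of the form $\pi^* \cE$ this local model is exactly the one produced by the orbifold pullback $c^*\cE$ — both are, on the punctured disc, the trivial extension of $\cE|_{C\setminus p}$ pulled back up the fibration, and the only data that can differ is the monodromy/parabolic weight at $p$, which is trivial on both sides precisely because $\cE$ carries no parabolic structure.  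Away from $p$, $S$ and $S(m)$ are canonically isomorphic over $C$, and $\Log$ is the identity there, so the two functors agree on the complement of the fiber.  A gluing/GAGA argument (using that $S(m)$, even when non-algebraic, still has $\pi_P^* \cE$ algebraic because it is pulled back from the algebraic curve $C$, or the stack $\cC$) then shows the two resulting bundles on $S(m)$ are isomorphic, naturally in $\cE$, which is the commutativity of the square.

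The main obstacle I expect is the local analysis at the transformed fiber: making precise the claim that Zube's analytic gluing datum for $\pi^*\cE$ coincides with the one coming from $c^*$ of the trivial parabolic bundle.  This requires writing down Zube's $\Exp$/$\Log$ correspondence explicitly in a neighborhood of the multiple fiber, identifying the relevant transition function (an $m$-th root construction in the fiber direction twisted by the descent data on $\cC$), and verifying that for a \emph{pulled-back} bundle this transition function is trivializable in a way compatible with the orbifold structure — equivalently, that $\Log \circ \pi_P^* = \pi_P^* \circ c^*$ at the level of the local models.  Once this local identification is in hand, the commutativity over $C \setminus \{p\}$ is essentially tautological (both functors restrict to $\pi_P^*$ on the common open part), and naturality in $\cE$ follows because every construction involved — $c^*$, $\pi_P^*$, and Zube's gluing — is functorial.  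Finally one notes the $k=1$ case stated here extends verbatim to several fibers $p_1,\dots,p_k$ since $\Log$ and the logarithmic transformations are supported on disjoint fibers and the orbifold $\cC$ simply acquires one stacky point per $p_i$.
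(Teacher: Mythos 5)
Your proposal follows essentially the same route as the paper: reduce to the local model near the transformed fiber, identify Zube's $\Log$ applied to a pulled-back bundle with the orbifold/equivariant pullback via Biswas's parabolic--orbifold correspondence, invoke GAGA for algebraicity, and observe that everything agrees away from $p$. The local comparison you flag as the main obstacle is exactly what the paper carries out, by noting that $q_2^*\bigl(\Log(\cE)_{\Delta}\bigr) \cong p_2^*(\cE_{\Delta}) \cong (\widetilde{\pi}|_{\widetilde{\Delta}})^* p_1^*(E_{\Delta})$ with its natural $\Gamma$-action, which under the Biswas/GAGA correspondence is precisely $c^*E$.
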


\begin{rema}
Our proofs show that the above results are true when we perform (inverse) logarithmic transformation on an elliptic surface $\pi : S(m_1, \cdots, m_k) \to C$ which has several multiple fibers.
\end{rema}

\subsection*{Conventions}

We will work over $\CC$. In this paper, $C$ will denote a smooth projective curve of genus $g \geq 2.$ We will assume that all multiple fibers of an elliptic surface $\pi : S \to C$ are multiples of smooth elliptic curves. And we will perform logarithmic transformation only along smooth fibers of the elliptic fibration $\pi : S \to C.$ We will assume that $S$ and its logarithmic transformation $S(m)$ are both smooth projective surfaces. In this paper, $H$ will denote an ample divisor on an elliptic surface which induces a good metric in the sense of \cite{Bauer}.

\subsection*{Acknowledgements}

The second named author thanks Han-Bom Moon and Alexander Petkov for helpful discussions about related topics.

\bigskip

\section{Vector bundles on complex projective varieties}

In this section we review some preliminaries about (parabolic) vector bundles on complex projective varieties.

\subsection{GAGA}

In this paper, we often need to do some analytic operations on coherent analytic sheaves and obtain new coherent analytic sheaves. Then a natural question is whether the new coherent analytic sheaves are algebraic. Serre proved the following classical result in \cite{Serre}.

\begin{theo}[Serre, GAGA principle]
Let $X$ be a complex projective variety and $\widetilde{\cF}$ be a coherent analytic sheaf on $X^{\an}.$ Then there is a unique coherent algebraic sheaf $\cF$ on $X$ such that $\cF^{\an} \cong \widetilde{\cF}.$
\end{theo}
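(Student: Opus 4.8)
The plan is to follow Serre's original strategy, reducing everything to projective space and comparing algebraic with analytic cohomology. First I would reduce to the case $X = \PP^n$: a closed embedding $\iota : X \hookrightarrow \PP^n$ identifies, via $\iota_*$, coherent (algebraic or analytic) sheaves on $X$ with those on $\PP^n$ annihilated by the ideal sheaf of $X$, and analytification commutes with $\iota_*$; so it suffices to treat $\PP^n$.

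The core is the cohomology comparison theorem: for every coherent algebraic sheaf $\cF$ on $\PP^n$ the natural maps $H^i(\PP^n, \cF) \to H^i(\PP^{n,\an}, \cF^{\an})$ are isomorphisms for all $i$. I would check this first for $\cF = \cO(d)$ by direct computation --- on the algebraic side Serre's classical description of $H^*(\PP^n, \cO(d))$, on the analytic side the same \v{C}ech computation with respect to the standard affine cover of $\PP^{n,\an}$ by $n+1$ Stein opens, whose higher cohomology vanishes by Cartan's Theorem B. The general case follows by descending induction on $i$ --- both sides vanish for $i > n$, the analytic side again because of the $(n+1)$-fold Stein cover --- together with induction on $\dim \mathrm{Supp}\,\cF$: from a surjection $\cE := \bigoplus_j \cO(-d_j) \twoheadrightarrow \cF$ with kernel $\cK$, the long exact cohomology sequences and the five lemma, using that the comparison is already known for $\cE$ and (inductively) for $\cK$ in degree $i+1$, upgrade surjectivity in degree $i+1$ to an isomorphism in degree $i$. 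Applying the comparison to the coherent sheaf $\hhom(\cF,\cG)$ and taking $H^0$ then shows that analytification is fully faithful on coherent sheaves over $\PP^n$; in particular this yields uniqueness of $\cF$ in the theorem.

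It remains to prove essential surjectivity, which is the main obstacle. Given a coherent analytic sheaf $\widetilde\cF$ on $\PP^{n,\an}$, I would show that $\widetilde\cF(d)$ is globally generated for $d \gg 0$; this is where genuine analytic input is unavoidable, combining the Cartan--Serre finiteness of $H^*(\PP^{n,\an},-)$ on coherent sheaves with Cartan's Theorems A and B on the Stein charts. Such a twist gives an analytic surjection $\cO(-d)^{\oplus N} \twoheadrightarrow \widetilde\cF$; applying the same to its (coherent analytic) kernel produces a presentation $\cO(-e)^{\oplus M} \xrightarrow{\varphi} \cO(-d)^{\oplus N} \to \widetilde\cF \to 0$. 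By full faithfulness $\varphi = \psi^{\an}$ for a unique algebraic map $\psi$; then $\cF := \coker\psi$ is coherent algebraic, and since analytification is exact we get $\cF^{\an} \cong \coker\varphi \cong \widetilde\cF$. The crux, to emphasize, is the global-generation-after-twisting step: once the comparison for $\cO(d)$ and the $\Hom$ statement are in place, the rest is formal manipulation of exact sequences, whereas the twisting step is precisely where the compactness of $\PP^{n,\an}$ and Cartan's coherence theory must be used.
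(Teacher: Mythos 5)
The paper does not prove this statement at all---it is quoted as a classical theorem with a citation to \cite{Serre}---and your outline is exactly Serre's original argument: reduction to $\PP^n$ via a closed embedding, the cohomology comparison first for $\cO(d)$ by \v{C}ech computation on the Stein affine cover and then for general coherent sheaves by descending induction on the degree with a resolution by sums of $\cO(d_j)$, full faithfulness via $H^0$ of $\hhom(\cF,\cG)$, and essential surjectivity by global generation of $\widetilde{\cF}(d)$ for $d \gg 0$ followed by algebraization of a two-step presentation. This is the correct and standard route; the one step you treat as a black box (global generation after twisting, which Serre establishes by induction on $n$ via hyperplane restriction together with Cartan--Serre finiteness) is, as you say, the genuinely analytic core, so your sketch matches the cited proof in substance.
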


\subsection{Parabolic bundles on a smooth projective variety}

We review basic notions and facts about parabolic bundles on a smooth projective variety. See \cite{Biswas} for more details. Let $Y$ be a smooth projective variety and $D$ be a simple normal crossing divisor on $Y.$ Let $D=\sum^h_{\lambda=1} D_{\lambda}$ where $D_{\lambda}$ is an irreducible component of $D.$ 

\begin{defi}
Let $Y$ be a smooth projective variety and $D$ be a divisor. Let $V$ be a torsion-free coherent sheaf on $Y.$ \\
(1) A quasi-parabolic structure on $V$ with respect to $D$ is
a filtration given by the following subsheaves.
$$ V = F_1(V) \supset F_2(V) \supset \cdots \supset F_{l}(V) \supset F_{{l+1}}(V) = V(-D) $$ 
 (2) A parabolic weight is a collection of real numbers $0 \leq a_1 < \cdots < a_l < 1.$ \\
 (3) A parabolic structure on $V$ is a quasi-parabolic structure on $V$ with parabolic weights. \\

The sheaf $V$ with a parabolic structure is called a parabolic sheaf and will be denoted by $V_*.$ When $V$ is a vector bundle, $V_*$ will be called a parabolic vector bundle.
\end{defi}

Let $V_*$ be a parabolic sheaf and let $t \in \RR.$ We can define $V_t$ as 
$$ V_t = F_i(V)(-[t]D) $$
where $a_{i-1} < t - [t] \leq a_i$

\begin{defi}
A parabolic sheaf $W_*$ is a parabolic subsheaf of a parabolic sheaf $V_*$ if \\
(1) $W$ is a subsheaf of $V$ and $V/W$ is torsion-free. \\
(2) $W_t \subset V_t$ for all $t \in \RR.$ \\
(3) For $s < t,$ if $W_s \subset V_t,$ then $W_s=W_t.$
\end{defi}

\begin{defi}
(1) The parabolic degree of a parabolic bundle $V_*$ of rank $r$ is defined as follows.
$$ \mathrm{pardeg} ~ V_* := \int^1_0 \mathrm{deg}(V_t) dt + r \cdot \mathrm{deg} D $$
(2) The parabolic slope of $V_*$ is defined as follows.
$$ \mathrm{par} \mu(V_*) = \frac{\mathrm{pardeg} ~ V_*}{r} $$
\end{defi}

\begin{defi}
A parabolic bundle $V_*$ is stable (resp. semistable) if for any parabolic subbundle $W_*,$ we have $\mathrm{par} \mu(W_*) < \mathrm{par} \mu(V_*)$ (resp. $\mathrm{par} \mu(W_*) \leq \mathrm{par} \mu(V_*)$).
\end{defi}

Let $V_*$ be a parabolic vector bundle on $Y.$ From the definition, we have the following inclusions
$$ V(-D) \subset F_i(V) \subset V $$
which induces the following sequence.
$$ 0 \to F_i(V)/V(-D) \to V/V(-D) \to V/F_i(V) \to 0 $$

Then we have the following filtration on $V|_D$
$$ V|_D = F_1(V|_D) \supset F_2(V|_D) \supset \cdots \supset F_{l}(V|_D) \supset F_{{l+1}}(V|_D) = 0 $$ 
where $F_i(V|_D) = F_i(V)/V(-D).$ \\

Sometimes we use the above filtration on $V|_D$ to give a parabolic structure on $Y.$

\subsection{Orbifold bundles} 

Biswas studied the relation between parabolic bundles and orbifold bundles in \cite{Biswas}. Let $X$ be a smooth projective variety with finite group $\Gamma$-action. 

\begin{defi}
An orbifold sheaf on $X$ is a torsion-free coherent sheaf $V$ on $X$ where the action of $\Gamma$ on $X$ lifts to $V.$
\end{defi}

\begin{defi}
A subsheaf $W$ of an orbifold sheaf $V$ is $\Gamma$-saturated if it is stable under the $\Gamma$-action and $V/W$ is torsion free. An orbifold sheaf $V$ is orbifold stable (resp. semistable) if, for any $\Gamma$-saturated subsheaf $W$ of $V$ with $0 < \rank W < \rank V,$ we have the inequality $\mu(W) < \mu(V)$ (resp. $\mu(W) \leq \mu(V)$). 
\end{defi}

Let $q : X \to Y := X / \Gamma$ be the quotient map and let us assume that $Y$ is smooth. Let $\widetilde{D} \subset X$ be the reduced ramification divisor and let $D=q(\widetilde{D})$ be the parabolic divisor. 

\begin{theo}\cite{Biswas}
The category of parabolic bundles on $Y$ with respect to $D$ satisfying \cite[Assumption 3.2]{Biswas} is equivalent to the category of $\Gamma$ orbifold bundles on $X.$
\end{theo}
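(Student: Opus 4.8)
The plan is to construct the equivalence as a pair of functors and check they are mutually inverse. Starting from a $\Gamma$-orbifold bundle $V$ on $X$, I would define its image under the Biswas functor as follows: for each irreducible component $D_\lambda$ of the parabolic divisor $D = q(\widetilde D)$, let $\widetilde D_\lambda$ be the corresponding component of $\widetilde D$ with ramification index $n_\lambda$ (the order of the stabilizer of a generic point of $\widetilde D_\lambda$). The key observation is that $q_* V$ is a $\Gamma$-equivariant sheaf on $Y$ (with trivial action since $Y = X/\Gamma$), hence descends to a coherent sheaf on $Y$; more precisely one takes the invariant pushforward $(q_* V)^\Gamma$ and, for the filtration, the subsheaves $(q_*(V \otimes \cO_X(-j \widetilde D_\lambda)))^\Gamma$ for $0 \le j \le n_\lambda$, with parabolic weight $j/n_\lambda$ on the $j$-th step. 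The first thing to verify is that these invariant pushforwards are locally free and that the resulting filtration indeed defines a parabolic bundle with weights in $\frac{1}{n_\lambda}\ZZ \cap [0,1)$, i.e.\ one satisfying \cite[Assumption 3.2]{Biswas}; this is a local computation near $D$ using the local structure of $q$ as $z \mapsto z^{n_\lambda}$ together with the $\Gamma$-action, and reduces to the classical correspondence between filtered $\cO_X$-modules with a cyclic group action and parabolically filtered modules downstairs.

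Conversely, given a parabolic bundle $W_*$ on $Y$ with weights in $\frac{1}{n_\lambda}\ZZ$, I would reconstruct an orbifold bundle by the standard ``Biswas pullback'': locally, write the parabolic weights as $a_i = m_i/N$ for a common denominator $N$ dividing the $n_\lambda$, take the $\cO_X$-module $q^* W$ and modify it along $\widetilde D$ according to the filtration — concretely, over a local chart on which $q$ looks like $z \mapsto z^{n_\lambda}$, twist the pullback by the appropriate fractional-power line bundle determined by the filtration steps, and glue. Because $X$ carries a $\Gamma$-action lifting the deck transformations and the weights are $1/n_\lambda$-integral, these local modifications are $\Gamma$-equivariant and patch to a genuine orbifold bundle on $X$. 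One then checks functoriality in morphisms (a morphism of parabolic bundles is, by the filtration-compatibility conditions, precisely a $\Gamma$-equivariant morphism of the associated orbifold bundles) and that the two constructions compose to the identity on both sides — again a local check, since away from $D$ and $\widetilde D$ both functors are just ordinary pullback/pushforward along the étale quotient map.

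Finally I would match up the stability notions so that the equivalence is compatible with the respective (semi)stable subcategories: one shows $\mathrm{pardeg}$ of the parabolic bundle equals $\frac{1}{|\Gamma|}\deg$ of the orbifold bundle (this is the projection formula for $q$ combined with the definition of parabolic degree as $\int_0^1 \deg(W_t)\,dt + r\deg D$), and that $\Gamma$-saturated subsheaves correspond bijectively to parabolic subsheaves under the correspondence. Hence slopes are preserved up to the constant $|\Gamma|$, and orbifold (semi)stability matches parabolic (semi)stability.

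The main obstacle, and the part requiring genuine care rather than formal manipulation, is the local analysis near the divisor: one must show that the invariant pushforward of a locally free $\Gamma$-sheaf stays locally free and that the natural filtration it carries has exactly the claimed weights, together with the converse patching of fractional twists into an honest equivariant bundle. This is where \cite[Assumption 3.2]{Biswas} — that the parabolic weights are controlled by the ramification indices $n_\lambda$ — is essential: without it the fractional modifications upstairs would fail to be $\Gamma$-equivariant, and the functor in the direction from parabolic bundles to orbifold bundles would not be defined. Everything else (functoriality, the inverse-equivalence identities, compatibility of degrees and subobjects) follows once this local dictionary is in place, and away from the branch locus is just descent along an étale $\Gamma$-cover.
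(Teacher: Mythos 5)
The paper does not prove this statement; it is quoted directly from Biswas \cite{Biswas}, so there is no in-paper argument to compare against. Your sketch reproduces the standard proof from that reference: the invariant direct image $(q_*V)^\Gamma$ with the filtration induced by twisting along $\widetilde D_\lambda$ (weights $j/n_\lambda$) in one direction, the $\Gamma$-equivariant fractional modification of $q^*W$ along $\widetilde D$ in the other, with the genuine content concentrated in the local analysis near the ramification divisor and the degree comparison $\mathrm{pardeg}\, W_* = \frac{1}{|\Gamma|}\deg V$ for the stability statement; this is correct in outline. One small caveat: Assumption 3.2 of \cite{Biswas} is essentially the rationality of the weights together with the requirement that the quasi-parabolic filtration be by subbundles compatible along intersections of the components $D_\lambda$, while the condition that the weight denominators divide the ramification indices $n_\lambda$ is the separate hypothesis tying the parabolic data to the particular cover $q$ — your converse construction needs both, so they should not be conflated.
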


\bigskip

\section{Deligne-Mumford analytic stacks} 

In this section, we review some the theory of Deligne-Mumford analytic stacks.

\subsection{Analytic stacks}

We will review some results about analytic stacks following \cite{BN}. Let $\Cpx$ be the category of complex manifolds with the usual Grothendieck topology where coverings are given by topological open covers. There is a 2-category of stacks over $\Cpx.$ We will recall some basic definitions in \cite{BN}.

\begin{defi}
(1) A stack $\cX$ over $\Cpx$ is a pre-Deligne-Mumford analytic stack if there is an epimorphism $u : U \to \cX$ from a complex manifold $U$ and $u$ is representable by local homeomorphisms. \\
(2) A morphism $f : \cX \to \cY$ of pre-Deligne-Mumford analytic stacks is representable if for any map $Y \to \cY$ from a complex manifold $Y$ which is representable by local homeomorphisms, $X = Y \times_{\cY} \cX$ is a complex manifold.
\end{defi}

\begin{defi}\cite[Definition 3.3]{BN}
A pre-Deligne-Mumford analytic stack $\cX$ is a Deligne-Mumford analytic stack if the diagonal $\cX \to \cX \times \cX$ is representable by closed maps with finite fibers.
\end{defi}

\subsection{GAGA for analytic stacks}

We need to work over certain Deligne-Mumford stack in this paper. Therefore we need GAGA theorem for analytic stacks.

\begin{defi}
A stack $\cY$ is a geometric stack is if it is quasi-compact and the diagonal morphism $\cY \to \cY \times \cY$ is affine.
\end{defi}

In \cite{Lurie}, Lurie proved the following result.

\begin{theo}
Let $\cX$ be a proper Deligne-Mumford stack and $\cY$ be a geometric stack of finite type. Then the analytification functor is an equivalence.
\end{theo}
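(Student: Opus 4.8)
The plan is to reduce the theorem to Serre's GAGA, stated above for projective varieties (and its standard extension to proper schemes and proper algebraic spaces), by combining two devices: Tannakian reconstruction of morphisms into a geometric stack, and proper cohomological descent, which presents a proper Deligne--Mumford stack in terms of proper schemes.

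First I would invoke Tannaka duality for geometric stacks (Lurie, \cite{Lurie}): for any stack $\cZ$ with mild finiteness hypotheses and any geometric stack $\cY$ of finite type, a morphism $\cZ \to \cY$ is the same data as a symmetric monoidal, colimit-preserving functor $\mathrm{QCoh}(\cY) \to \mathrm{QCoh}(\cZ)$ carrying flat objects to flat objects, and the analogous statement holds in the complex-analytic category with $\mathrm{QCoh}^{\an}$. Applying this to $\cZ = \cX$ and to $\cZ = \cX^{\an}$, the claim that the natural map $\Hom(\cX,\cY) \to \Hom(\cX^{\an},\cY^{\an})$ is an equivalence reduces to the assertion that analytification induces a symmetric monoidal equivalence $\mathrm{QCoh}(\cX) \xrightarrow{\ \sim\ } \mathrm{QCoh}(\cX^{\an})$, compatibly with the locally-free/flatness condition. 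Since $\cX$ is Noetherian and Deligne--Mumford, $\mathrm{QCoh}(\cX) = \mathrm{Ind}(\mathrm{Coh}(\cX))$, likewise on the analytic side, and analytification commutes with filtered colimits; so it suffices to prove GAGA for \emph{coherent} sheaves on the proper Deligne--Mumford stack $\cX$.

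I would then establish this coherent GAGA by dévissage. Serre's theorem gives it for projective schemes; Chow's lemma together with Noetherian induction on the support and the five lemma applied to short exact sequences of coherent sheaves extends it to proper schemes, and then, via Chow's lemma for algebraic spaces, to proper algebraic spaces. To reach the stack $\cX$, I would use a Chow-type lemma for Deligne--Mumford stacks producing a proper surjection $p\colon P \to \cX$ with $P$ a proper scheme, and the fact that $p$ is of universal cohomological descent for (the bounded derived category of) coherent sheaves: then $\cX$ is presented as the limit over $\Delta$ of the proper algebraic spaces $P \times_{\cX}\cdots\times_{\cX} P$, the same presentation holds analytically (and these analytic fiber products are again proper analytic algebraic spaces), GAGA is already known term by term, and analytification commutes with these bounded limits of cohomology. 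Fully faithfulness reduces to the comparison $R\Gamma(\cX,\cF) \simeq R\Gamma(\cX^{\an},\cF^{\an})$ for $\cF$ coherent --- obtained from the same descent applied to $\cF = \hhom(\cF_1,\cF_2)$, since $\hhom$ is computed locally on an étale atlas --- while essential surjectivity amounts to descending an algebraic model of a coherent analytic sheaf from $P^{\an}$ together with its descent datum, the latter being algebraic by the fully faithful part just proved.

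I expect the main obstacle to be precisely this last reduction, from proper schemes and algebraic spaces to genuine Deligne--Mumford stacks: one needs the Chow lemma for such stacks available, and --- more delicately --- one needs that a proper surjection from a scheme yields cohomological descent for coherent sheaves, so that the limit presentation is valid and compatible with analytification; controlling the convergence of the descent spectral sequence and setting up the analytic Tannaka duality carefully are the real technical points. For the applications in this paper this all simplifies drastically: $\cX$ is always a smooth proper orbifold curve $\cC$, and $\cY$ has a very simple shape, so GAGA for $\cC$ can be read off directly from Serre's theorem on a smooth projective curve together with the equivalence between orbifold bundles and parabolic bundles recalled above.
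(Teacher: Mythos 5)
The paper does not actually prove this statement: it is quoted as a theorem of Lurie \cite{Lurie}, and only its corollary (every vector bundle on a proper Deligne--Mumford stack is algebraic) is used later. So there is no internal argument to measure yours against; what you have written is, in outline, a reconstruction of the strategy behind Lurie's own proof --- describe maps into a geometric stack Tannakianly on both the algebraic and the analytic side, and feed in a coherent GAGA theorem for the proper Deligne--Mumford source. That outline is sound, but the two steps you present as reductions are exactly where the substance lies: the analytic Tannaka duality (identifying $\Hom(\cX^{\an},\cY^{\an})$ with suitable tensor functors) is not formal and occupies a large part of \cite{Lurie}, and coherent GAGA for proper Deligne--Mumford stacks is itself a nontrivial theorem requiring the Chow-lemma/cohomological-descent (or coarse-space plus finite-cover) d\'evissage you sketch; this is precisely the subject of \cite{Hall}, which the paper also lists. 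One intermediate claim needs care: reducing the comparison of mapping categories to a symmetric monoidal equivalence $\mathrm{QCoh}(\cX)\simeq\mathrm{QCoh}(\cX^{\an})$ only makes sense if the analytic side is interpreted as $\mathrm{Ind}(\mathrm{Coh}(\cX^{\an}))$ (or handled as in \cite{Lurie}), since general quasi-coherent analytic sheaves are poorly behaved. Your closing observation is the operative one for this paper: here $\cX$ is a proper orbifold curve $\cC$ and only the corollary on vector bundles is invoked, so the full strength of the theorem is never needed.
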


As a corollary, we can see that every vector bundle on a proper Deligne-Mumford stack is algebraic.

\begin{coro}
Let $\cX$ be a proper Deligne-Mumford stack and $\widetilde{\cF}$ be a vector bundle on it. Then there is an algebraic vector bundle $\cF$ such that $\cF^{\an} \cong \widetilde{\cF}.$
\end{coro}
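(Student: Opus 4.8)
The statement to prove is the corollary: on a proper Deligne-Mumford stack $\cX$, every vector bundle is the analytification of an algebraic one.

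The plan is to deduce this immediately from the GAGA equivalence of the preceding theorem (Lurie's result). First I would observe that a vector bundle of rank $r$ on $\cX$, either algebraic or analytic, is precisely an $\cO_{\cX}$-module that is locally free of rank $r$; more invariantly, the assignment $T \mapsto \{\text{vector bundles of rank } r \text{ on } \cX \times T\}$ (or the classifying stack $B\GL_r$) packages these, and a vector bundle on $\cX$ is a morphism $\cX \to B\GL_r$. The key point is that $B\GL_r$ is a geometric stack of finite type: it is quasi-compact (a single point surjects onto it) and its diagonal is $\GL_r$, which is affine, so the diagonal morphism $B\GL_r \to B\GL_r \times B\GL_r$ is affine. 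Hence the hypotheses of the GAGA theorem are met with $\cY = B\GL_r$ and $\cX$ our proper Deligne-Mumford stack.

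Next I would invoke the theorem: the analytification functor gives an equivalence between the groupoid of morphisms $\cX \to B\GL_r$ in the algebraic category and the groupoid of morphisms $\cX^{\an} \to (B\GL_r)^{\an} = B\GL_r^{\an}$ in the analytic category. Unwinding, this says exactly that every analytic rank-$r$ vector bundle $\widetilde{\cF}$ on $\cX$ is isomorphic to $\cF^{\an}$ for an algebraic rank-$r$ vector bundle $\cF$, and that this $\cF$ is unique up to isomorphism (fully faithfulness on $\Hom(\cX, B\GL_r)$). Since an arbitrary vector bundle decomposes as a finite direct sum according to the locally constant rank function on the connected components of $\cX$, applying the rank-$r$ case on each component and taking the direct sum finishes the argument.

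The only real content beyond citing Lurie's theorem is the verification that $B\GL_r$ is a geometric stack of finite type, which is routine; the more delicate conceptual point, which I would state explicitly, is the translation between "vector bundle on $\cX$" and "map $\cX \to B\GL_r$" and the compatibility of this translation with analytification, i.e. that $(B\GL_r)^{\an} \simeq B(\GL_r^{\an})$ classifies analytic vector bundles on analytic stacks. Granting that formalism, which is standard, the corollary is a direct consequence. I do not expect any genuine obstacle here — this is the expected packaging of GAGA for coherent sheaves (applied to the locally free case) in stacky language.
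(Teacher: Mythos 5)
Your argument is correct and is essentially the paper's intended one: the corollary is stated as a direct consequence of Lurie's theorem, applied with $\cY = B\GL_r$, which is geometric of finite type since it is quasi-compact with affine ($\GL_r$-form) diagonal, so that the equivalence on mapping categories $\Hom(\cX, B\GL_r) \simeq \Hom(\cX^{\an}, (B\GL_r)^{\an})$ translates into algebraization of analytic vector bundles. Your explicit verification of the hypotheses and the reduction to constant rank on connected components are fine and add nothing beyond what the paper leaves implicit.
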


\bigskip

\section{Vector bundles on an elliptic surface and \\ parabolic bundles on its base curve}

Bauer studied vector bundles on an elliptic surface and parabolic bundles on its base curve in \cite{Bauer}. Varma extended his result for Higgs bundles when the genus of the base curve is greater than or equal to 2 in \cite{Varma}.

\subsection{Vector bundles on an elliptic surface and parabolic bundles on its base curve}

Let us recall Bauer's construction.

\begin{defi}
An elliptic surface $S$ is a smooth projective surface with a fibration $\pi : S \to C$ to a smooth projective curve $C$ whose general fiber is a smooth curve of genus one.
\end{defi}

Let $\pi : S \to C$ be an elliptic surface.

\begin{defi}
(1) A divisor $D$ on $S$ is vertical if it is linearly equivalent to $\sum_i a_i F_i$ where $F_i$ are fibers of $\pi.$ \\
(2) A rank $r$ vector bundle $V$ on $S$ is vertical if $V$ has a filtration
$$ 0=V_0 \subset V_1 \subset \cdots \subset V_r=V $$
by sub-bundles $V_i$ with $V_i/V_{i-1} \cong \cO(D_i)$ where $D_i$ are vertical divisors.
\end{defi}

Bauer constructed two functors $\pi_{P}^*$ and $\pi^{P}_*$ and proved the following in \cite{Bauer}.

\begin{prop}\cite{Bauer}
There are two functors 
$$ \pi^{P}_* : \cV^{\mathrm{ss-vBun}}_{S(m_1, \cdots, m_k)} \to \cV^{\mathrm{ss-ParBun}}_{(C,\bp,\bm)} $$
and 
$$ \pi_{P}^* : \cV^{\mathrm{ss-ParBun}}_{(C,\bp,\bm)} \to \cV^{\mathrm{ss-vBun}}_{S(m_1, \cdots, m_k)} $$
which are inverse to each other. Hence $ \cV^{\mathrm{ss-vBun}}_{S(m_1, \cdots, m_k)}$ and $\cV^{\mathrm{ss-ParBun}}_{(C,\bp,\bm)}$ are equivalent.
\end{prop}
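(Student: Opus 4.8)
The plan is to construct the two functors explicitly and then check that the composites are naturally isomorphic to the identity functors. I would first recall Bauer's two constructions in the form needed here. For the functor $\pi^{P}_*$, given an $H$-semistable vertical bundle $\cE$ on $S(m_1,\dots,m_k)$ with $c_2(\cE)=0$, one pushes forward along $\pi$ to obtain a torsion-free sheaf on $C$ away from the points $p_i$, and near each multiple fiber $m_iF_{p_i}$ one uses the canonical filtration coming from the restriction $\cE|_{m_iF_{p_i}}$ (together with the structure of the thickened fiber) to build a quasi-parabolic filtration at $p_i$; the weights at $p_i$ are forced into $\frac{1}{m_i}\ZZ\cap[0,1)$ by the $m_i$-torsion structure on the multiple fiber. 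For $\pi^*_{P}$, given a parabolic bundle $V_*$ on $(C,\bp,\bm)$, one pulls back the underlying bundle $V$ and then modifies it fiber-by-fiber over each $p_i$ by an elementary-transformation-type operation dictated by the parabolic filtration and weights, producing a vertical bundle on $S(m_1,\dots,m_k)$; vanishing of $c_2$ is built into the construction since all the modifications are supported on fibers and the local models have trivial $c_2$.

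Second, I would verify that these functors land in the claimed categories and are well-defined on morphisms. The key point is matching semistability on the two sides: here the correspondence is engineered precisely so that parabolic subbundles of $V_*$ correspond to saturated vertical subsheaves of $\pi^*_{P}V_*$, and the parabolic slope is, up to the normalization in the definition of $\mathrm{pardeg}$, the same as the slope computed with respect to $H$ on the surface (this is where the hypothesis that $H$ gives a good metric in the sense of \cite{Bauer} is used — it ensures the relevant intersection numbers with $H$ reduce to degrees on $C$ plus the parabolic correction terms). Hence $H$-semistability of $\cE$ translates into parabolic semistability of $\pi^P_*\cE$ and conversely.

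Third, I would construct the natural isomorphisms $\pi^P_* \circ \pi^*_{P} \cong \mathrm{id}$ and $\pi^*_{P}\circ \pi^P_* \cong \mathrm{id}$. For the first, this is a local computation over a disc around each $p_i$: starting from $V_*$, forming $\pi^*_{P}V_*$ and pushing forward recovers the filtration and weights, because the elementary modification and the subsequent pushforward/filtration are inverse local operations on the $m_i$-fold cover of the disc. For the second, given $\cE$ vertical with $c_2=0$, one shows $\cE$ is reconstructed from its associated parabolic data; away from the $p_i$ this is just that a vertical bundle with $c_2 = 0$ is pulled back from $C$ (using that the restriction to a general fiber is trivial, forced by $c_2=0$ plus semistability), and over the $p_i$ it again reduces to the local model on the multiple fiber.

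\textbf{Main obstacle.} I expect the hard part to be the careful local analysis at the multiple fibers $m_iF_{p_i}$: one must understand the structure of vertical bundles restricted to a non-reduced multiple fiber, identify exactly which parabolic weights in $\frac{1}{m_i}\ZZ\cap[0,1)$ arise, and check that the elementary-transformation and pushforward operations are mutually inverse there — including keeping track of how $c_2$ behaves under these modifications so that the $c_2=0$ condition is preserved in both directions. The global-to-local reduction (that everything interesting happens over small discs around the $p_i$, with the complement handled by the trivial-on-fibers statement) is routine once the local picture and the slope comparison are in place; the content and the possibility of error are concentrated in the multiple-fiber computation. Since this is essentially the content of \cite{Bauer} (with \cite{Varma} supplying the Higgs-field variant and the genus $\geq 2$ input for existence of stable objects), I would cite those works for the technical core and present here only the statement and the outline of how the functors fit together.
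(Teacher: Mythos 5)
Your proposal is consistent with how the paper handles this statement: the paper gives no proof of its own and simply quotes the result from \cite{Bauer} (with \cite{Varma} for the Higgs variant), and your outline of the pushforward/elementary-modification constructions, the slope comparison via the good metric $H$, and the local analysis at multiple fibers is a faithful sketch of Bauer's argument, which you then correctly defer to. So this is essentially the same approach as the paper, just with more detail on what the cited construction actually does.
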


Indeed, the functor $\pi_{P}^*$ is defined for any parabolic bundle on $C.$

\bigskip

\section{Zube's construction}

In \cite{Zube}, Zube constructed mappings between bundles on an elliptic surface and bundles on an elliptic surface obtained by a logarithmic transformation. In this section we review Zube's constructions.

\subsection{Logarithmic transform}

Let us recall the notion of logarithmic transformation. We will follow explanation in \cite{Zube}. \\

Let $\pi : S \to C$ be an elliptic surface. Let $p \in C$ be a point in $C$ such that $\pi$ is smooth over $p$ and $\Delta = \{ ~ x ~ | ~ |x-p| < \epsilon \} \subset C$ be a small disc containing $p.$ Let $S_{\Delta} = \pi^{-1}(\Delta).$ Then $S_{\Delta}$ is isomorphic to $(\Delta \times \CC) / {\ZZ^{\oplus 2}}$ where $\ZZ^{\oplus 2}$ acts on $(\Delta \times \CC)$ by $(n_1, n_2) \cdot (x, z) = (x, z + n_1 + n_2 \omega(x)).$ Note that the fiber $f_x$ over $x$ is isomorphic to $\CC /(\ZZ \oplus \ZZ \omega(x)).$ The elliptic surface $S$ is gluing of $S_{\Delta}$ and $S^* = \pi^{-1}(C \setminus \{ p \}).$

\begin{displaymath}
\xymatrix{ 
S_{\Delta} \ar[r] \ar[d] & S \ar[d] & S^* \ar[l] \ar[d]  \\
\Delta \ar[r] & C & C \setminus \{ p \} \ar[l]  }
\end{displaymath}

Let $\gamma(x)$ be a point of order $m$ on each fiber $f_x.$ We can choose a basis of $\ZZ \oplus \ZZ \omega(x)$ so that $m \gamma(x) = q \in \ZZ, \gcd(q,m)=1$ for all $x \in \Delta.$ Let $\widetilde{\Delta} = \{ ~ y ~ | ~ |y| < \epsilon^{\frac{1}{m}} \}$ be a disc with parameter $y$ and we define $S(m)_{\widetilde{\Delta}}$ to be $\widetilde{\Delta} \times \CC / (\ZZ^{\oplus 2} \oplus \ZZ/m)$ where $(n_1, n_2, h) \in \ZZ^{\oplus 2} \oplus \ZZ/m$ acts on $\widetilde{\Delta} \times \CC$ as follows.
$$ (n_1, n_2, h) \cdot (y,z) = (\xi^{h}y, z+n_1+n_2 \omega(y^m)+\frac{hq}{m}) $$
Here $\xi = \exp(\frac{2\pi \sqrt{-1}}{m}).$ Then there is an analytic isomorphism $\Lambda : S(m)^*_{\Delta} \to S^*_{\Delta}$ induced by the following formula.
$$ (y,z) \mapsto (y^m, z-\frac{q}{2 \pi \sqrt{-1}} \log(y) ) $$

\begin{displaymath}
\xymatrix{ 
S(m)_{\widetilde{\Delta}} \ar[r] \ar[dd] & S(m)_{\Delta} \\
 & S(m)^*_{\Delta} \ar[u] \ar[d]^{\Lambda} \\
S_{\Delta} & S^*_{\Delta} \ar[l] }
\end{displaymath}

We define the logarithmic transformation of $S$ to be the surface $S(m)$ obtained by gluing $S(m)_{\Delta}$ and $S^*$ via the analytic isomorphism $\Lambda : S(m)^*_{\Delta} \to S^*_{\Delta}$ as follows.

\begin{displaymath}
\xymatrix{ 
S(m)_{\Delta} \ar[r] \ar[d] & S(m) \ar[d] & S^* \ar[l] \ar[d]  \\
\Delta \ar[r] & C & C \setminus \{ p \} \ar[l]  }
\end{displaymath}

As we mentioned, it is difficult to understand how algebro-geometric information change via logarithmic transformation in general because it is an analytic operation.

\bigskip

\subsection{Zube's construction}

Let us recall Zube's construction. Let $q : X \to Y$ be a finite morphism between complex manifolds and $G$ be the covering group. Let $V$ be a bundle on $Y$ such that $q^*V$ is trivial. Let $\psi_V : q^*V \cong X \times \CC^r$ be a trivialization. \\

For $x \in X$ and $g \in G,$ we have the following diagram
\begin{displaymath}
\xymatrix{ 
\{ gx \} \times \CC^r & & q^*V|_{gx} = V_{q(x)} = q^*V|_x \ar[ll]^{\psi_V(gx) ~~~} \ar[rr]_{~~~ \psi_V(x)} & & \{ x \} \times \CC^r
}
\end{displaymath}
and the factor of automorphy $e^V_g(x)$ is defined as follows.
$$ e^V_g(x) = \psi_V(gx) \psi_V^{-1}(x) $$
Then $e_g(x)$ satisfies the following identity
$$ e^V_{g_1g_2}(x) = e^V_{g_1}(g_2 x) e^V_{g_2}(x) $$
for every $g_1, g_2 \in G.$ From $e^V_g(x)$ we can recover $V$ on $Y$ by taking the quotient $\CC^r \times X/\sim.$ \\
 
Let $\varphi : V \to W$ be a homomorphism between bundles on $Y.$ Let $e^{V,W}(x) = \psi^{-1}_W(x) q^*(\varphi) \psi_V(x).$ Then we have
$$ e^W_g(x) e^{V,W}(x) = e^{V,W}(gx) e^V_g(x) $$
for all $g \in G$ and $x \in X.$ Conversely, any function $e^{V,W} : X \to M(\mathrm{rk} ~ V, \mathrm{rk} ~ W)$ satisfies the above condition gives a homomorphism of bundles $V \to W$ on $Y.$

\bigskip

Let $\cE$ be a torsion free sheaf on $S.$ Then we define $Sing(\cE) = \{ x \in S ~ | ~ \cE_x$ is not a free $\cO_{S,x}$-module$\}$. Zube introduced two categories of torsion free sheaves on $\cV(p)$ and $\cV(p_m)$ as follows. \\

Let $\cV(p)$ be the category whose objects are pairs $(\cE, i)$ where $\cE$ is a torsion free sheaf on $S$ where $\cE|_{f_p}$ is a homogeneous bundle on the fiber $f_p$ and $Sing(\cE) \cap f_p$ is empty. Let $\cV(p_m)$ be the category whose objects are pairs $(\cF, j)$ where $\cF$ is a torsion free sheaf on $S(m)$ where $\cF|_{f_p}$ is a homogeneous bundle on the fiber $f_p$ and $Sing(\cF) \cap f_p$ is empty.

\begin{lemm}\cite[Lemma 1.2]{Zube}
Let $\cE \in \cV(p)$ (resp. $\cF \in \cV(p_m)$). Then there is a disc $\Delta$ around the point $p$ such that the followings are true. \\
(1) For any $m, l \in \ZZ,$ there is an analytic function $e_{m,l} : \Delta \to \GL(r, \CC)$ such that the function $a_{m,l}(x,z)=e_{m,l}(x)$ is a factor of automorphy of the bundle $\cE_{\Delta}.$ Moreover, the function $e_{m,l}$ are uniquely determined upto conjugacy of the bundle. \\
(2) Let $\cF \in \cV(p_m).$ Then there exists a bundle $\Exp(\cF_{\Delta})$ on $S_{\Delta}$ and an isomorphism ${\Lambda^{-1}}^*(\cF^*) \cong \Exp(\cF_{\Delta})^*.$ \\
(3) Let $\cE \in \cV(p).$ Then there exists a bundle $\Log(\cE_{\Delta})$ on $S(m)_{\Delta}$ and an isomorphism ${\Lambda^{-1}}^*(\cE^*) \cong \Log(\cE_{\Delta})^*.$ \\
(4) If $\cE_{\Delta}$ is a line bundle satisfying $\cE \in \cV(p),$ then there exists $m$ bundles $\Log(\cE_{\Delta}) = \Log(\cE_{\Delta}) \otimes \cO(if_p)$ on $S(m)_{\Delta}$ for $i=0, \cdots, m-1$ and $m$ isomorphisms ${\Lambda^{-1}}^*(\cE_{\Delta}^*) \cong \Log(\cE_{\Delta})^*.$ \\
(5) $\Exp(\cO(if_p))=\cO_{S_{\Delta}},$ $\Exp(\Log(\cE_{\Delta}))=\cE_{\Delta},$ $\Exp(\cO_{S(m)_{\Delta}})=\cO_{S_{\Delta}},$ $\Log(\cO_{S_{\Delta}})=\cO_{S(m)_{\Delta}},$ $\Log(\cO_{S_{\Delta}})=\cO(if_p).$
\end{lemm}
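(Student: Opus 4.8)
The plan is to run the entire statement through the formalism of factors of automorphy recalled above, applied to the two presentations $S_\Delta=(\Delta\times\CC)/\ZZ^{\oplus 2}$ and $S(m)_{\widetilde\Delta}=(\widetilde\Delta\times\CC)/(\ZZ^{\oplus 2}\oplus\ZZ/m)$ together with the explicit coordinate change defining $\Lambda$. First I would shrink $\Delta$: since $Sing(\cE)$ is a closed analytic subset of $S$ meeting $f_p$ in the empty set, after shrinking $\cE_\Delta$ is a rank $r$ vector bundle; and since on an elliptic curve ``homogeneous'' coincides with ``semistable of degree $0$'' (the Jordan--Hölder factors of a semistable degree $0$ bundle are degree $0$ line bundles, and such a bundle is therefore a sum of Atiyah bundles twisted by degree $0$ line bundles), which is an open condition in holomorphic families, after a further shrinking every restriction $\cE_\Delta|_{f_x}$, $x\in\Delta$, is homogeneous. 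Since $\Delta\times\CC$ is Stein and contractible, the Oka--Grauert principle shows that the pullback of $\cE_\Delta$ to $\Delta\times\CC$ is holomorphically trivial; fixing such a trivialization produces a factor of automorphy $e_{n_1,n_2}(x,z)$, and the same discussion applies verbatim to $\cF\in\cV(p_m)$ on $S(m)_{\widetilde\Delta}$.

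For (1): a priori the $e_{n_1,n_2}$ depend on $z$, and the content is that they can be made $z$-independent. For each fixed $x$, homogeneity of $\cE_\Delta|_{f_x}$ provides a factor of automorphy with respect to the lattice $\ZZ\oplus\ZZ\omega(x)$ that is constant in $z$ (Atiyah's normal form: sums of degree $0$ line bundles with constant factors of automorphy, tensored with unipotent Atiyah bundles whose factors of automorphy are constant unipotent matrices). I would show that this normalization can be carried out holomorphically in $x$ over all of $\Delta$, so that the trivialization may be chosen with $e_{n_1,n_2}(x,z)=e_{n_1,n_2}(x)$; the cocycle relation then reduces everything to the two commuting $\GL(r,\CC)$-valued functions $e_{1,0},e_{0,1}$ on $\Delta$, and $e_{m,l}:=e_{1,0}^{\,m}e_{0,1}^{\,l}$ in the notation of the lemma. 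Two such trivializations differ by a holomorphic map $\Delta\times\CC\to\GL(r,\CC)$ which, after the same normalization, may be taken independent of $z$; conjugation by it is precisely the asserted ambiguity ``up to conjugacy''.

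For (2)--(5): the map $\Lambda$ lifts on universal covers to $(y,z)\mapsto\bigl(y^m,\,z-\tfrac{q}{2\pi\sqrt{-1}}\log y\bigr)$, which is well defined only modulo the $\ZZ/m$-action because of the branch of $\log y$ and the $m$-th power $x=y^m$. Given $\cF\in\cV(p_m)$ I would normalize its factor of automorphy as in (1) --- the $\ZZ^{\oplus 2}$-part independent of $z$, the $\ZZ/m$-generator contributing some $\Theta(y,z)$ constrained by the cocycle relations --- and then transport this data through $\Lambda^{-1}$. The key point is that the multivaluedness introduced by $\log y$ and by $x^{1/m}$ is exactly cancelled by the $\ZZ/m$-equivariance, so the transported matrix functions are single-valued in $x$ and extend holomorphically across $x=0$ to $\GL(r,\CC)$-valued functions, hence define a bundle $\Exp(\cF_\Delta)$ on $S_\Delta$ together with the isomorphism ${\Lambda^{-1}}^*(\cF^*)\cong\Exp(\cF_\Delta)^*$; this is (2), and (3) is the same construction performed in the opposite direction along $\Lambda$. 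For (4), a line bundle $\cE_\Delta$ has abelian factor of automorphy and extending it across $x=0$ forces a choice among the $m$ values of $\tfrac1m\log y$; the $m$ choices differ precisely by the factor of automorphy of $\cO(f_p)$, which is supported on the central fiber and trivial on $S^*_\Delta$, giving the $m$ bundles $\Log(\cE_\Delta)\otimes\cO(if_p)$. Finally (5) is a direct check with the normal forms: $\cO(if_p)$ is trivial on $S^*_\Delta$, so $\Exp$ carries it to $\cO_{S_\Delta}$ (and symmetrically for $\Log$ of a trivial bundle), while $\Exp\circ\Log=\id$ and $\Log\circ\Exp=\id$ because the two constructions transport factors of automorphy through mutually inverse coordinate changes.

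I expect the main obstacle to be the family normalization underlying (1): showing that the factor of automorphy of a holomorphic family of semistable degree $0$ bundles on elliptic curves over a disc can be made $z$-independent holomorphically in the base, together with the uniqueness up to conjugacy --- a relative version of Atiyah's normal form, delicate because the isomorphism type of the fibre, and in particular its degree $0$ line-bundle summands, may vary within the family. Once that is in place, parts (2)--(5) are essentially bookkeeping with the explicit formula for $\Lambda$, the only genuine subtlety being the single-valuedness in $x$ and the holomorphic extension of the transported factor of automorphy across the central fiber, which is exactly where the hypothesis that $\gamma(x)$ has exact order $m$ with $\gcd(q,m)=1$ enters.
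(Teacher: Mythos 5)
First, a contextual remark: this lemma is quoted from Zube (\cite[Lemma 1.2]{Zube}); the paper does not prove it, it only reviews the construction afterwards, and your overall route --- trivialize the pullback on the Stein, contractible cover $\Delta\times\CC$ (resp.\ $\widetilde\Delta\times\CC$), work with factors of automorphy, and transport them through the explicit coordinate change defining $\Lambda$ --- is exactly the route of Zube's construction as the paper sketches it. So in spirit you are on the same track as the source.

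As a proof, however, there is a genuine gap, and it sits exactly at the step you yourself flag and defer: part (1). You write ``I would show that this normalization can be carried out holomorphically in $x$ over all of $\Delta$,'' but this relative statement is the actual content of the lemma, not a technicality to be postponed. A fiberwise appeal to Atiyah's normal form does not suffice, because the isomorphism type of $\cE|_{f_x}$ can jump inside the family (for instance two distinct degree-zero line-bundle summands over $x\neq p$ colliding into a nontrivial Atiyah extension at $x=p$), so the fiberwise constant factors of automorphy need not assemble into matrices $e_{n_1,n_2}(x)$ depending holomorphically on $x$; one needs an argument at the level of the family (in Zube's treatment this is where the hypotheses that $\cE|_{f_p}$ is homogeneous and $Sing(\cE)\cap f_p=\emptyset$ are genuinely used, via relative/direct-image considerations, to produce the $z$-independent automorphy data and its uniqueness up to conjugacy). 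The same deferral infects the uniqueness claim in (1) (that a change of trivialization can itself be taken $z$-independent) and hence the well-definedness ``up to conjugacy.'' Parts (2)--(5) you treat as bookkeeping, which is fair once (1) is available, but even there the assertion that the transported matrices are single-valued in $x$ and extend holomorphically across $x=0$ --- the place where the $\log y$ term, the $\ZZ/m$-equivariance, and $\gcd(q,m)=1$ interact --- is asserted rather than checked, and in (4) the identification of the $m$ extensions with twists by $\cO(if_p)$ needs the explicit automorphy factor of $\cO(f_p)$ on $S(m)_\Delta$, which you do not write down. In short: the skeleton matches the construction being cited, but the load-bearing step is missing, so the proposal is an outline rather than a proof.
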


Let us briefly review the construction. Recall that we have the following diagram.

\begin{displaymath}
\xymatrix{ 
 & S(m)_{\Delta} \ar[r] \ar[d]^{\pi_m|_{\Delta}} & S(m) \ar[d]^{\pi_m} \\
S(m)_{\widetilde{\Delta}} \ar[d]_{\widetilde{\pi}|_{\widetilde{\Delta}}} \ar[rd]^{q_1} \ar[ru]^{q_2} & \Delta \ar[r] & C \\
\widetilde{\Delta} \ar[rd]_{p_1} \ar[ru]_{p_2} & S_{\Delta} \ar[r] \ar[d]^{\pi|_{\Delta}} & S \ar[d]^{\pi} \\
 & \Delta \ar[r] & C }
\end{displaymath}

Zube constructed $\Exp : \cV(p_m) \to \cV(p)$ as follows. Let $\cF$ be a vector bundle on $\cV(p_m)$ having factor of automorphy $v_{n_1, n_2, h}(y,z)$ (when we pull-back the bundle to $\widetilde{\Delta} \times \CC$, then it becomes trivial). From the above lemma, one can see that the factors of automorphy $v$ which is independent of $z.$ Then $q^*_2\cF$ has the factors of automorphy $v_{n_1, n_2}(y,z)$ defined as follows. 
$$ v_{n_1, n_2, h}(y,z) = v_{n_1, n_2}(\xi^h y, z+\frac{hq}{m}) $$
And it is also independent of $z$ (again, when we pull-back the bundle to $\widetilde{\Delta} \times \CC$, then it becomes trivial). Then one can define a new factors of automorphy $e_{n_1, n_2, h}$ as follows (note that the action on $\widetilde{\Delta} \times \CC$ becomes different).
$$ e_{n_1, n_2, h}(y,z) = (\xi^h y, z+n_1+n_2 \omega(y^h) ) $$
and it induces factors of automorphy $e_{n_1, n_2}(x,z).$ Zube defined a vector bundle $\Exp(\cE_{\Delta})$ on $S_{\Delta}$ as the bundle given via the factors of automorphy $e_{n_1, n_2}(x,z).$ 

\bigskip

Now let us recall the construction of $\Log : \cV(p) \to \cV(p_m).$ Let $\cE$ be a vector bundle on $S$ given by the factor of automorphy $e_{n_1, n_2}(x,z)$ which is independent of $z.$ Then one can define a vector bundle on $S(m)$ by the factor of automorphy $v(y,z)=e(y^m, z).$ Zube defined a vector bundle $\Log(\cF)$ on $S(m)_{\Delta}$ as the bundle given via the factors of automorphy $v.$ 

\bigskip

He also defined other functors $\Log_i : \cV(p) \to \cV(p_m)$ in a similar way. See \cite{Zube} for more details about the construction. 

\bigskip

Using these local constructions, Zube proved the following theorem.

\begin{theo}\cite[Theorem 1.5]{Zube}
There are functors $\Log_i : \cV(p) \to \cV(p_m)$ and $\Exp : \cV(p_m) \to \cV(p)$ satisfying the following properties. \\
(1) We have $\Exp \circ \Log_i(\cE) = \cE$ for $\cE \in \cV(p).$ \\
(2) $\Log_i(\cE_1 \otimes \cE_2, j_1 \otimes j_2) = \Log_i(\cE_1, j_1) \otimes \Log(\cE_2, j_2)$ and $\Exp(\cF_1 \otimes \cF_2, i_1 \otimes i_2) = \Exp(\cF_1, i_1) \otimes \Exp(\cF_2, i_2).$ \\
(3) We have $c_2(\cF) = c_2(\Exp(\cF))$ and $c_2(\cE) = c_2(\Log_i(\cE)).$ \\
(4) $c_1(\cE_J) = c_1(\det(\cE)_{\det{J}})$ and $c_1(\cF_I) = c_1(\det(\cF)_{\det{I}}).$ \\
(5) $\Log_i(\cO, \id) = (\cO(i f_p), \id),$ $\Exp(\cO,\id)=(\cO,\id)$ and $\Exp(\cO(f_p),\id)=(\cO,\id).$
\end{theo}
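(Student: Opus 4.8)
The plan is to build $\Exp$ and $\Log_i$ globally by gluing, and then read off properties (1)--(5) from the local assertions in the lemma recalled above (\cite[Lemma 1.2]{Zube}). Both $S$ and $S(m)$ are obtained by gluing the common piece $S^* = \pi^{-1}(C\setminus\{p\})$ to a neighborhood of the (possibly multiple) fiber over $p$ along the overlap $S^*_\Delta \cong S(m)^*_\Delta$, the identification being induced by $\Lambda$. Given $\cF \in \cV(p_m)$, its restriction $\cF|_{S^*}$, the local bundle $\Exp(\cF_\Delta)$ on $S_\Delta$, and the isomorphism ${\Lambda^{-1}}^*(\cF^*)\cong \Exp(\cF_\Delta)^*$ over the overlap glue to an analytic bundle on $S$, which is algebraic by GAGA. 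This defines $\Exp$ on objects; on morphisms one propagates a homomorphism across the gluing using the compatibility relation $e^W_g(x)\,e^{V,W}(x)=e^{V,W}(gx)\,e^V_g(x)$ recalled above. Replacing $\Exp(\cF_\Delta)$ by $\Log(\cE_\Delta)$, resp.\ by the twisted local bundle appearing in part (4) of the lemma above, defines $\Log$, resp.\ $\Log_i$, from $\cV(p)$ to $\cV(p_m)$. Independence of the auxiliary choices (trivializations of the pullbacks to $\widetilde\Delta\times\CC$, the radius of $\Delta$) up to isomorphism is exactly the uniqueness-up-to-conjugacy clause of part (1) of the lemma above.

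\textbf{Properties (1), (5) and (2).} Away from $\Delta$ each of $\Exp$ and $\Log_i$ is the identity, and over $\Delta$ the relevant composite gluing isomorphism is the identity by the local equalities $\Exp(\Log(\cE_\Delta))=\cE_\Delta$, $\Exp(\cO(if_p))=\cO_{S_\Delta}$, $\Log(\cO_{S_\Delta})=\cO_{S(m)_\Delta}$, etc., of part (5) of the lemma above. This gives both $\Exp\circ\Log_i=\id$ on $\cV(p)$ (using in addition multiplicativity, below, to absorb the twist by $\cO(if_p)$) and the three identities in (5). For (2), the matrix-valued factor of automorphy of a tensor product is the Kronecker product of the two factors of automorphy, and the substitutions defining $\Exp$ locally (replace $\xi^h y$, shift $z$) and $\Log$ locally (replace $x$ by $y^m$) are applied entrywise, hence commute with $\otimes$; over $S^*$ both functors are the identity, which also commutes with $\otimes$. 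Matching gluing data then yields the two displayed identities, the only bookkeeping point being that the twist indices add ($\cO(if_p)\otimes\cO(jf_p)=\cO((i+j)f_p)$), which is why $\Log$ and not $\Log_i$ appears on the right.

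\textbf{Properties (3) and (4).} Both $\Exp$ and $\Log_i$ commute with $\det$, since the determinant of a bundle has factor of automorphy $\det$ of the matrix-valued one and the defining substitutions commute with $\det$; together with (2) this reduces the $c_1$-statements of (4) to the rank-one case, which is precisely parts (4)--(5) of the lemma above. For (3), note that $\Lambda$, glued to the identity on $S^*$, yields an isomorphism between $S$ minus the fiber $f_p$ and $S(m)$ minus the multiple fiber $mf_p$, under which $\Exp(\cF)$ restricted to $S\setminus f_p$ corresponds to $\cF$ restricted to $S(m)\setminus mf_p$. Hence $c_2(\Exp(\cF))$ and $c_2(\cF)$ can differ only by the local contributions of the two bundles supported along $f_p$ and $mf_p$ respectively; since $\cF|_{f_p}$, and therefore also $\Exp(\cF)|_{f_p}$, is a homogeneous bundle on an elliptic curve (semistable of degree $0$, so with vanishing Chern classes on the fiber), these contributions vanish and $c_2(\Exp(\cF))=c_2(\cF)$. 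The same argument applied to $\Log_i$ gives $c_2(\Log_i(\cE))=c_2(\cE)$.

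\textbf{Main obstacle.} The formal part --- functoriality and properties (1), (2), (4), (5) --- is essentially bookkeeping with factors of automorphy once the local lemma is in hand. The substantive point is property (3): to make rigorous the phrase ``local contribution of $c_2$ along a fiber'' one must compute, e.g.\ via a relative Chern class on $S_\Delta$, or by deforming $\cF|_{S_\Delta}$ to a split bundle $\bigoplus_i \cO(a_i f_p)$ and tracking $c_2$, that a bundle with homogeneous restriction to the fiber contributes nothing; it is here that the hypothesis defining $\cV(p)$ and $\cV(p_m)$ --- homogeneity of the restriction to $f_p$ --- is used in an essential way. A secondary subtlety, already flagged, is verifying that the gluing construction descends to a well-defined functor independent of all auxiliary choices.
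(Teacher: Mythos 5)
The paper does not actually prove this statement: it is Zube's Theorem 1.5, quoted as background, and the text only recalls the local constructions of $\Exp$ and $\Log$ via factors of automorphy together with \cite[Lemma 1.2]{Zube}. Your globalization strategy --- keep $\cF|_{S^*}$ untouched, glue it to the local objects $\Exp(\cF_\Delta)$, $\Log(\cE_\Delta)$ through the isomorphisms over the overlap supplied by the lemma, and check everything on the two charts --- is the same route Zube takes and is consistent with the sketch given in the paper; for functoriality and for properties (1), (2), (4), (5) your outline is essentially the intended bookkeeping (one small caveat: objects of $\cV(p)$ and $\cV(p_m)$ are torsion-free sheaves that are only locally free near $f_p$, so the gluing produces coherent sheaves rather than bundles, and since Zube's statement is analytic the appeal to GAGA is not needed, though it is harmless under the paper's projectivity conventions).

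The genuine gap is property (3), and while you flag it yourself, the reason you offer does not close it. ``Local contribution of $c_2$ along the fiber'' is not a well-defined quantity, and the remark that $\cF|_{f_p}$ is homogeneous and therefore has vanishing Chern classes on the fiber is vacuous: the restriction of any sheaf to a curve has vanishing $c_2$. The substantive issue is that two sheaves agreeing on the complement of a fiber can perfectly well have different $c_2$ (elementary modifications along $f_p$ do exactly this in general), so equality of $c_2$ must come from the structure of the construction itself --- for instance a Riemann--Roch computation showing that a modification along $f_p$ whose quotient has degree zero on the fiber leaves $c_2$ unchanged when $c_1\cdot f_p=0$, or Zube's own computation with the $z$-independent factors of automorphy. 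As written, $c_2(\Exp(\cF))=c_2(\cF)$ and $c_2(\Log_i(\cE))=c_2(\cE)$ are asserted rather than proved, so this step needs to be supplied before the argument is complete.
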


For our purpose, it is enough to consider vector bundles with trivial local automorphisms. Therefore we need the following.

\begin{defi}
Let $\cV_0(p)$ (resp. $\cV_0(p_m)$) be the full subcategory of $\cV(p)$ (resp. $\cV(p_m)$) whose objects are pairs $(\cE,\id)$ (resp. $(\cF,\id)$). From the above theorem, we see that there are functors $\Log : \cV_0(p) \to \cV_0(p_m)$ and $\Exp : \cV_0(p_m) \to \cV_0(p).$
\end{defi}

\begin{rema}
From now on, the functors $\Log$ are $\Exp$ will mean the functors restricted to $\cV_0(p)$ and $\cV_0(p_m).$
\end{rema}

\bigskip

\section{The functors $\Exp$ and $\Log$ in terms of \\ parabolic bundles on the base curve}

In this section, we discuss how to understand the functors $\Exp : \cV^{\mathrm{vBun}}_{S(m)} \to \cV^{\mathrm{vBun}}_S$ and $\Log : \cV^{\mathrm{vBun}}_{S} \to \cV^{\mathrm{vBun}}_{S(m)}$ in terms of parabolic bundles on the base curve. Our starting point was observing certain similarity among constructions in \cite{Bauer, Biswas, Varma, Zube}. Let us recall the following diagram.
\begin{displaymath}
\xymatrix{ 
 & S(m)_{\Delta} \ar[r] \ar[d]^{\pi_m|_{\Delta}} & S(m) \ar[d]^{\pi_m} \\
S(m)_{\widetilde{\Delta}} \ar[d]_{\widetilde{\pi}|_{\widetilde{\Delta}}} \ar[rd]^{q_1} \ar[ru]^{q_2} & \Delta \ar[r] & C \\
\widetilde{\Delta} \ar[rd]_{p_1} \ar[ru]_{p_2} & S_{\Delta} \ar[r] \ar[d]^{\pi|_{\Delta}} & S \ar[d]^{\pi} \\
 & \Delta \ar[r] & C }
\end{displaymath}
Note that we have two $\ZZ/m$-actions on $S(m)_{\widetilde{\Delta}}.$ Let $\Gamma_1$ be the group isomorphic to $\ZZ/m$ acting on $S(m)_{\widetilde{\Delta}}$ whose quotient is $S_{\Delta}.$ Let $\Gamma_2$ be the group isomorphic to $\ZZ/m$ acting on $S(m)_{\widetilde{\Delta}}$ whose quotient is $S(m)_{\Delta}.$ Let $\Gamma$ be the group isomorphic to $\ZZ/m$ acting on $\widetilde{\Delta}$ whose quotient is $\Delta.$

\subsection{The functor $\Exp$ in terms of parabolic bundles on the base curve}

\begin{theo}
There is the following commutative diagram.
\begin{displaymath}
\xymatrix{ 
\cV^{\mathrm{vBun}}_{S(m)} \ar[r]^{\Exp} & \cV^{\mathrm{vBun}}_{S} \\
\cV^{\mathrm{ParBun}}_{(C,p,m)} ~~~ \ar[r]_{\mathrm{forgetful}} \ar[u]^{\pi^*_P} & \cV^{\mathrm{Bun}}_{C} \ar[u]_{\pi^*_P} }
\end{displaymath}

\end{theo}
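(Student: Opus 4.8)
The plan is to unwind both composites in the square and compare the resulting factors of automorphy on $\widetilde{\Delta}\times\CC$. Away from the multiple fiber, i.e. over $C\setminus\{p\}$, all four functors are (pullbacks of) ordinary bundle pullback along the elliptic fibration together with the fixed analytic identification $\Lambda\colon S(m)^*_\Delta\to S^*_\Delta$, so the only place where the two composites could differ is over the disc $\Delta$ (equivalently over $\widetilde\Delta$ after passing to $S(m)_{\widetilde\Delta}$). Hence it suffices to prove commutativity locally, on $S_\Delta$, and then glue using that both $\Exp$ and $\pi_P^*$ are compatible with the $\Lambda$-gluing (this compatibility is exactly the content of parts (2) and (3) of Zube's Lemma~1.2 together with the definition of $\pi_P^*$ in \cite{Bauer}).

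First I would spell out the top-right composite $\Exp\circ\pi_P^*$. Starting from a parabolic bundle $V_*$ on $(C,p,m)$, the functor $\pi_P^*$ produces a vertical bundle on $S(m)$ whose restriction to $S(m)_\Delta$, after pulling back to $S(m)_{\widetilde\Delta}$, is trivialized and carries a factor of automorphy for $\ZZ^{\oplus 2}\oplus\ZZ/m$ in which the $\ZZ/m$-part $h$ acts through the parabolic weights of $V_*$ (a diagonal matrix built from $\xi^{m_i a_i}$) while the $\ZZ^{\oplus 2}$-part acts trivially in the fiber direction because the underlying sheaf is a pullback from the base. Applying $\Exp$ then forms $q_2^*$, replaces the variable $y\mapsto \xi^h y$, $z\mapsto z+\tfrac{hq}{m}$, and — this is the key point of Zube's construction — produces the new factor of automorphy $e_{n_1,n_2,h}(y,z)=(\xi^h y,\ z+n_1+n_2\omega(y^h))$, i.e. it \emph{absorbs} the $\ZZ/m$-twist into the coordinate change and leaves a factor of automorphy that is constant (identity) in the fiber direction and descends to $S_\Delta$ as the pullback from $\Delta$ of the underlying bundle $V$ with its parabolic structure forgotten. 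Next I would compute the bottom-left composite $\pi_P^*\circ(\text{forgetful})$: the forgetful functor sends $V_*$ to the bundle $V$ on $C$, and $\pi_P^*$ of an honest bundle on $C$ (no parabolic data) is just the naive pullback $\pi^*V$, which over $S_\Delta$ is exactly the pullback from $\Delta$ of $V$ with trivial factor of automorphy. Comparing, the two outputs agree on the nose, and the identifications are natural in $V_*$, giving the commuting $2$-isomorphism.

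The main obstacle I expect is not the formal bookkeeping but matching conventions: one must check that the particular choice of basis of $\ZZ\oplus\ZZ\omega(x)$ used in the definition of the logarithmic transform (so that $m\gamma(x)=q$, $\gcd(q,m)=1$) is compatible with the parabolic weight $\tfrac{1}{m}\ZZ\cap[0,1)$ normalization used in $\pi_P^*$, so that the diagonal twist produced by $\pi_P^*$ is exactly the one that $\Exp$ kills via $h\mapsto \xi^h y$. Concretely I would verify: (i) $\pi_P^*$ of a rank-one parabolic bundle $\cO_C(D)$ with weight $i/m$ at $p$ has, on $S(m)_{\widetilde\Delta}$, the factor of automorphy with $\ZZ/m$-part equal to multiplication by $\xi^{i h}$ (this is Bauer's local model), and (ii) $\Exp$ applied to that equals $\cO(D)$ pulled back to $S$ — which is \cite[Lemma 1.2(1),(5)]{Zube} / \cite{Zube}[Theorem 1.5(5)] in the line-bundle case, since there $\Exp(\cO(if_p))=\cO_{S_\Delta}$. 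The general rank-$r$ case then follows because every object of $\cV^{\mathrm{vBun}}_{S(m)}$ and of $\cV^{\mathrm{ParBun}}_{(C,p,m)}$ is filtered by such line bundles and all four functors are exact (respect the vertical/parabolic filtrations) and monoidal in the relevant sense, so commutativity on the associated graded pieces plus functoriality upgrades to commutativity in general. Finally, a short GAGA remark (Serre's GAGA, or the stacky version in the multiple-fiber case) guarantees that the analytically-defined bundle $\Exp\circ\pi_P^*(V_*)$ is algebraic, so the square lives in the stated algebraic categories.
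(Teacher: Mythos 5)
Your overall strategy --- reduce to the disc $\Delta$, pass to the common cover $S(m)_{\widetilde\Delta}$, and show that the $\ZZ/m$-twist introduced by the parabolic weights under $\pi_P^*$ is exactly what $\Exp$ removes --- is the same underlying mechanism as the paper's proof, just expressed in Zube's factor-of-automorphy language rather than in the paper's language of $\Gamma$-equivariant bundles: the paper realizes $\pi_P^*$ through Biswas's parabolic--orbifold correspondence on the cover $\widetilde{S(m)}\to S(m)$ (\'etale, with $\widetilde C\to C$ ramified at $p$), identifies $\Exp$ with the $\Gamma_1$-invariant pushforward along $p_2$, uses the base-change identity $({\pi|_{\Delta}})_* ({p_2}_*)^{\Gamma_1}\cong ({p_1}_*)^{\Gamma}(\widetilde{\pi}|_{\widetilde{\Delta}})_*$, and then invokes GAGA for the stack $[\widetilde C/\Gamma]$ to recognize $({p_1}_*)^{\Gamma}q_1^*$ as the forgetful functor. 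So the two routes are close in spirit, and your first paragraph, if carried out for arbitrary rank, would amount to the same computation.

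The genuine gap is in how you actually propose to verify the identity: you only check the rank-one case (your items (i) and (ii)) and then claim the general case follows because every object is ``filtered by such line bundles'' and the functors are exact. That upgrade is not valid as stated. Agreement of the two composites on the associated graded pieces of a filtration, even together with exactness, does not produce the required natural isomorphism $\Exp(\pi_P^*V_*)\cong \pi_P^*(V)$ for all $V_*$: two functors can agree on all line bundles and on all graded pieces yet act differently on extension classes, and commutativity of the square is a statement about a natural isomorphism of functors, not about classes in a Grothendieck group. Moreover, it is not automatic that a parabolic bundle admits a filtration by rank-one \emph{parabolic} subbundles whose induced weights lie in $\frac1m\ZZ$ and which is respected simultaneously by $\pi_P^*$, $\Exp$, and the forgetful functor, nor that $\Exp$ is exact on the relevant categories; none of this is argued. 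The repair is to do what your first paragraph gestures at but defers: compute the factor of automorphy of $\pi_P^*V_*$ on $S(m)_{\widetilde\Delta}$ for an arbitrary parabolic bundle (after locally splitting the quasi-parabolic flag over $\Delta$, the $\ZZ/m$-part is block-diagonal with eigenvalues $\xi^{m a_i}$ and the $\ZZ^{\oplus2}$-part is constant in $z$), apply $\Exp$ to this general cocycle, and check naturality in $V_*$ directly --- equivalently, phrase the whole computation as the invariant-pushforward argument of the paper, which handles all ranks at once.
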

\begin{proof}
There is a Riemann surface $\widetilde{C}$ with Galois group $\Gamma$ action whose quotient map $\widetilde{C} \to C$ is ramified at $p.$ Then we have the following commutative diagram
\begin{displaymath}
\xymatrix{ 
\widetilde{S(m)} \ar[r] \ar[d]_{\widetilde{\pi}} & S(m) \ar[d]^{\pi_m} \\
\widetilde{C} \ar[r] & C }
\end{displaymath}
such that $\widetilde{S(m)} \to \widetilde{C}$ does not have a multiple fiber and $\widetilde{S(m)} \to S(m)$ is \'etale (cf. \cite{FM, Varma}). Near the ramification point $p \in \Delta,$ the diagram is analytically isomorphic to the following diagram.
\begin{displaymath}
\xymatrix{ 
S(m)_{\widetilde{\Delta}} \ar[d]_{\widetilde{\pi}|_{\widetilde{\Delta}}} \ar[r]^{q_2} & S(m)_{\Delta} \ar[r] \ar[d]^{\pi_m|_{\Delta}} & S(m) \ar[d]^{\pi_m} \\
\widetilde{\Delta} \ar[r]_{q_1} & \Delta \ar[r] & C }
\end{displaymath}

Then the functor $\cV^{\mathrm{ParBun}}_{(C,p,m)} \to \cV^{\mathrm{vBun}}_{S(m)}$ can be understood using the following diagram where the horizontal arrows are equivalences (cf. \cite{Biswas}).
\begin{displaymath}
\xymatrix{ 
 \cV^{\Gamma_2-\mathrm{vBun}}_{\widetilde{S(m)}} \ar[r] & \cV^{\mathrm{vBun}}_{S(m)} \\
\cV^{\Gamma_2-\mathrm{Bun}}_{\widetilde{C}} \ar[r] \ar[u]^{\widetilde{\pi}^*} & \cV^{\mathrm{ParBun}}_{(C,p,m)} \ar[u]_{\pi_P^*} }
\end{displaymath}

Let $F$ be a parabolic bundle on $(C, p,m)$ and let $\cF = \pi_P^* F.$ Then we see that the pullback of $\cF_{\Delta}$ by $q_2$ is isomorphic to $(\widetilde{\pi}|_{\widetilde{\Delta}})^*q_1^*(F_{\Delta})$ which has both $\Gamma_1$ and $\Gamma_2$ actions since $\widetilde{\pi}|_{\widetilde{\Delta}}$ is equivariant with both $\Gamma_1$ and $\Gamma_2$ actions. Because we have the following commutative diagram and 
\begin{displaymath}
\xymatrix{ 
S(m)_{\widetilde{\Delta}} \ar[d]_{\widetilde{\pi}|_{\widetilde{\Delta}}} \ar[r]^{p_2} & S_{\Delta} \ar[d]^{\pi|_{\Delta}} \\
\widetilde{\Delta} \ar[r]^{p_1} & \Delta
}
\end{displaymath}
we have $({\pi|_{\Delta}})_* ({p_2}_*)^{\Gamma_1} \cong ({p_1}_*)^{\Gamma} (\widetilde{\pi}|_{\widetilde{\Delta}})_*$ when it applies to a $\Gamma_1$-equivariant sheaf. It implies that $({\pi|_{\Delta}})_* ({p_2}_*)^{\Gamma_1} (\widetilde{\pi}|_{\widetilde{\Delta}})^*q_1^*(F_{\Delta}) \cong ({p_1}_*)^{\Gamma} q_1^*(F_{\Delta}).$ Note that $p_1=q_1.$ Note that $[\widetilde{C}/\Gamma]$ is a Deligne-Mumford stack and we can also regard it as a analytic Deligne-Mumford stack. From GAGA principle, we see that the operation $({p_1}_*)^{\Gamma} q_1^*(F_{\Delta})$ uniquely determine a vector bundle and this operation corresponds to forgetting parabolic structure at $p.$ Therefore we obtain the desired result.
\end{proof}

\subsection{The functor $\Log$ in terms of parabolic bundles on the base curve}

Now we prove that the functor $\Log$ corresponds to pullback with respect to $c : \cC \to C.$

\begin{theo}
There is the following commutative diagram. 
\begin{displaymath}
\xymatrix{ 
\cV^{\mathrm{vBun}}_{S} \ar[r]^{\Log} & \cV^{\mathrm{vBun}}_{S(m)} \\
\cV^{\mathrm{Bun}}_{C} \ar[r]_{c^*}  \ar[u]^{\pi^*_P} & \cV^{\mathrm{ParBun}}_{(C,\bp,\bm)}  \ar[u]_{\pi^*_P} }
\end{displaymath}

\end{theo}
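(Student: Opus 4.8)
The plan is to mirror the proof of the previous theorem (the one treating $\Exp$), using the same auxiliary ramified cover $\widetilde{C}\to C$ with Galois group $\Gamma\cong\ZZ/m$ ramified at $p$, and the fibered diagram relating $S$, $S(m)$, $\widetilde{S(m)}$, and the associated Deligne-Mumford stack $\cC=[\widetilde{C}/\Gamma]$. The key point is that, by Biswas' equivalence (recalled earlier in the excerpt), a parabolic bundle on $(C,\bp,\bm)$ is the same datum as a $\Gamma$-orbifold bundle on $\widetilde{C}$, equivalently a vector bundle on $\cC$, and under this dictionary the functor $c^*$ from ordinary bundles on $C$ to parabolic bundles on $(C,\bp,\bm)$ corresponds to pulling a bundle on $C$ back to $\widetilde{C}$ with the tautological $\Gamma$-equivariant structure. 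So the content of the theorem is: applying $\pi_P^*$ after this pullback-to-$\cC$ gives the same vertical bundle on $S(m)$ as first applying $\pi_P^*$ on $C$ and then running Zube's $\Log$.

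First I would set up the local picture over the disc $\Delta$, exactly as in the $\Exp$ proof: near $p$ the diagram becomes the explicit analytic chart with $S(m)_{\widetilde\Delta}\to S(m)_\Delta$ (quotient by $\Gamma_2$), $S(m)_{\widetilde\Delta}\to S_\Delta$ (quotient by $\Gamma_1$), and $\widetilde\Delta\to\Delta$ (quotient by $\Gamma$), with $p_1=q_1$. Given an ordinary bundle $V$ on $C$, the bundle $\cE=\pi_P^*V$ on $S$ has, over $\Delta$, a factor of automorphy $e_{n_1,n_2}(x,z)$ independent of $z$ (this is exactly the shape of bundles in $\cV_0(p)$ that Zube's construction consumes), and by the explicit formula for $\pi_P^*$ this factor is pulled back from the base, i.e. $e_{n_1,n_2}(x,z)=e_{n_1,n_2}(x)$ comes from the transition data of $V$ on $\Delta$. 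Then I would compute both composites: (i) $\Log(\pi_P^*V)$ is, by Zube's local recipe, the bundle on $S(m)_\Delta$ with factor of automorphy $v(y,z)=e(y^m,z)$, glued to $S^*$ via $\Lambda$; (ii) $\pi_P^*(c^*V)$ is obtained by taking the $\Gamma$-equivariant pullback $q_1^*(V_\Delta)$ to $\widetilde\Delta$, pulling further back to $S(m)_{\widetilde\Delta}$ via $\widetilde\pi|_{\widetilde\Delta}$, and descending along $q_2$ (the $\Gamma_2$-quotient). Using that $\widetilde\pi|_{\widetilde\Delta}$ is equivariant for both $\Gamma_1$ and $\Gamma_2$ and that $q_1\circ(\text{reparametrization } y\mapsto y^m)$ realizes the factor $e(y^m,z)$, one checks that the descended bundle over $\Delta$ is precisely the one with factor of automorphy $v(y,z)=e(y^m,z)$, matching (i). Away from $p$ both composites agree with $\pi_P^*V$ restricted to $S^*$, and the two local descriptions are glued by the same $\Lambda$, so the global bundles on $S(m)$ coincide. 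As in the $\Exp$ proof, GAGA for (analytic) Deligne-Mumford stacks guarantees that the stacky pullback $({p_1}_*)$-type construction produces an honest algebraic bundle, so the identification is not merely analytic; and one should also note $c^*V$ carries trivial local automorphisms, keeping us inside $\cV_0$, where $\Log$ is defined. Functoriality (compatibility with morphisms) follows from the corresponding statement for factors of automorphy recalled in Section 5.

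The main obstacle I expect is bookkeeping the two distinct $\ZZ/m$-actions $\Gamma_1,\Gamma_2$ on $S(m)_{\widetilde\Delta}$ and making sure the descent along $q_2$ really reproduces Zube's $v(y,z)=e(y^m,z)$ rather than one of the twisted variants $\Log_i$; concretely, one must pin down that the tautological equivariant structure coming from $c^*$ (equivalently, zero parabolic weight / the ``$\id$'' object in $\cV_0(p_m)$) selects $\Log=\Log_0$ and not $\Log_i$ for $i\neq 0$. This is exactly parallel to how, in the $\Exp$ proof, one had to match $({p_1}_*)^\Gamma q_1^*(F_\Delta)$ with "forgetting the parabolic structure at $p$". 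Once that normalization is nailed down, the rest is a direct comparison of factors of automorphy together with the stacky GAGA argument already used above.

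\begin{proof}
There is a Riemann surface $\widetilde C$ with Galois group $\Gamma\cong\ZZ/m$ whose quotient map $c_{\widetilde C}:\widetilde C\to C$ is ramified at $p$, and as in the proof of the previous theorem we have the commutative diagram
\begin{displaymath}
\xymatrix{
\widetilde{S(m)} \ar[r] \ar[d]_{\widetilde\pi} & S(m) \ar[d]^{\pi_m} \\
\widetilde C \ar[r] & C }
\end{displaymath}
with $\widetilde{S(m)}\to\widetilde C$ free of multiple fibers and $\widetilde{S(m)}\to S(m)$ \'etale (cf. \cite{FM, Varma}), analytically isomorphic near $p$ to
\begin{displaymath}
\xymatrix{
S(m)_{\widetilde\Delta} \ar[d]_{\widetilde\pi|_{\widetilde\Delta}} \ar[r]^{q_2} & S(m)_\Delta \ar[r] \ar[d]^{\pi_m|_\Delta} & S(m) \ar[d]^{\pi_m} \\
\widetilde\Delta \ar[r]_{q_1} & \Delta \ar[r] & C }
\end{displaymath}
By \cite{Biswas} the category $\cV^{\mathrm{ParBun}}_{(C,p,m)}$ is equivalent to the category of $\Gamma$-orbifold bundles on $\widetilde C$, i.e. to $\cV^{\Gamma\text{-}\mathrm{Bun}}_{\widetilde C}$, and under this equivalence $c^*:\cV^{\mathrm{Bun}}_C\to\cV^{\mathrm{ParBun}}_{(C,p,m)}$ corresponds to $c_{\widetilde C}^*$, the pullback with its tautological $\Gamma$-equivariant structure; in particular $c^*V$ lies in $\cV_0(p)$ locally near $p$, so $\Log$ applies to $\pi_P^*V$ and the composite along the top of the diagram makes sense.

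Now fix $V\in\cV^{\mathrm{Bun}}_C$ and set $\cE=\pi_P^*V$. Over $\Delta$, by construction of $\pi_P^*$ the bundle $\cE_\Delta$ on $S_\Delta$ is given by a factor of automorphy $e_{n_1,n_2}(x,z)=e_{n_1,n_2}(x)$ which is independent of $z$ and pulled back from the transition data of $V_\Delta$. Zube's recipe then gives $\Log(\cE)$ over $\Delta$ as the bundle on $S(m)_\Delta$ with factor of automorphy $v(y,z)=e(y^m,z)$, glued to $\cE|_{S^*}$ over $S^*$ via the analytic isomorphism $\Lambda:S(m)^*_\Delta\to S^*_\Delta$.

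On the other hand, compute $\pi_P^*(c^*V)$ using the same diagram. The bundle $c^*V$ corresponds to $c_{\widetilde C}^*V$ on $\widetilde C$, whose restriction to $\widetilde\Delta$ is $q_1^*(V_\Delta)$ with its canonical $\Gamma$-equivariant structure. Pulling back along $\widetilde\pi|_{\widetilde\Delta}$ and using that $\widetilde\pi|_{\widetilde\Delta}$ is equivariant for both $\Gamma_1$ and $\Gamma_2$, we obtain on $S(m)_{\widetilde\Delta}$ a bundle with compatible $\Gamma_1$- and $\Gamma_2$-actions; its $\Gamma_2$-descent along $q_2$ is the local model for $\pi_P^*(c^*V)$ over $\Delta$. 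Since $q_1:\widetilde\Delta\to\Delta$ is the map $y\mapsto y^m$, the transition data of this descended bundle is exactly $e(y^m,z)$, i.e. it coincides with $v(y,z)$ above; this also identifies the chosen $\Gamma_2$-action with the one producing $\Log=\Log_0$ rather than a twist $\Log_i$, because the equivariant structure on $c^*V$ is the tautological (untwisted) one. Away from $p$ both $\Log(\pi_P^*V)$ and $\pi_P^*(c^*V)$ restrict to $\pi_P^*V$ on $S^*$, and the two local pieces are glued by the same $\Lambda$; hence the two bundles on $S(m)$ are canonically isomorphic. As in the previous proof, $[\widetilde C/\Gamma]$ is a (analytic) Deligne-Mumford stack, so by the GAGA principle the descent construction produces an algebraic vector bundle, and the isomorphism is algebraic. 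Functoriality in $V$ follows from the corresponding compatibility of morphisms with factors of automorphy recorded in Section~6. This proves that the diagram commutes.
\end{proof}
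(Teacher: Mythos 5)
Your proposal is correct and follows essentially the same route as the paper's proof: both identify the two composites locally over the disc via the common cover $S(m)_{\widetilde{\Delta}}$ with its $\Gamma_1$-, $\Gamma_2$-, and $\Gamma$-actions, use the Cartesian square $p_2^*(\pi|_\Delta)^*E_\Delta \cong (\widetilde{\pi}|_{\widetilde{\Delta}})^*p_1^*E_\Delta$ (equivalently, Zube's factor of automorphy $v(y,z)=e(y^m,z)$) to match this with $c^*$ via Biswas' orbifold/parabolic dictionary, and invoke GAGA for analytic Deligne--Mumford stacks. Your explicit factor-of-automorphy comparison and the remark pinning down $\Log=\Log_0$ versus the twists $\Log_i$ are slightly more detailed than the paper's argument but not a different method.
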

\begin{proof}
Let $E$ be a vector bundle on $C$ and let $\cE = \pi_P^* E.$ From our previous discussion, we see that the pullback of $\Log(\cE)_{\Delta}$ by $q_2$ is isomorphic to a $\Gamma_2$-equivariant sheaf $p^*_2(\cE_{\Delta})$ and hence isomorphic to $p^*_2 (\pi|_{\Delta})^* (E_{\Delta})$ near the point $p \in \Delta \subset C.$

Let us consider the following Cartesian diagram.
\begin{displaymath}
\xymatrix{ 
S(m)_{\widetilde{\Delta}} \ar[d]_{\widetilde{\pi}|_{\widetilde{\Delta}}} \ar[r]^{p_2} & S_{\Delta} \ar[d]^{\pi|_{\Delta}} \\
\widetilde{\Delta} \ar[r]^{p_1} & \Delta
}
\end{displaymath}

Then we have $p^*_2 (\pi|_{\Delta})^* (E_{\Delta}) \cong ({\widetilde{\pi}|_{\widetilde{\Delta}}})^* p^*_1 (E_{\Delta}).$ Note that this bundle has $\Gamma$-action. From the GAGA principle, we see that considering $({\widetilde{\pi}|_{\widetilde{\Delta}}})^* p^*_1 (E_{\Delta})$ with $\Gamma$-action corresponds to giving parabolic structure at $p.$ Therefore we obtain the desired result for the functor $\Log.$
\end{proof}

\begin{rema}
The same proofs show that the above results are true when we perform (inverse) logarithmic transformation on an elliptic surface $\pi : S(m_1, \cdots, m_k) \to C$ which has several multiple fibers.
\end{rema}

\bigskip

\section{Further directions}

In this section, we discuss several further directions. We also ask some natural questions.

\subsection{Vertical vector bundles with nonzero $c_2$}

Indeed, Zube's constructions work for certain bundles with nonzero $c_2$ and it is an interesting question whether there is an algebro-geometric way to understand the operations $\Exp,$ $\Log$ for those bundles.

\subsection{Elliptic surfaces with singular multiple fibers} 

It is also interesting to generalize Zube's work for the cases when the multiple fibers are singular. We need some extra care to deal with these cases.

\subsection{Gauge theory}

One of our original motivations for this work is to understand some gauge theoretic constructions in terms of algebraic geometry. It will be very nice if we can recover some of the classical results in gauge theory using our description. In order to do it, one need to extend our work to the vertical vector bundles with nonzero $c_2$.

\subsection{Derived category}

One of our original motivations for this work is to understand how derived category of coherent sheaves on an elliptic surface changes under the logarithmic transformation which seems to be a very challenging task at this moment. Recently, derived categories of some elliptic surfaces were studied and it turns out that there are (quasi-)phantom categories in the derived categories of certain Dolgachev surfaces and non-minimal Enriques surfaces (cf. \cite{Cho, CL}). On the other hand, there are several examples where one can find that the derived category of coherent sheaves and moduli space of vector bundles on a variety are closely related (see, for example \cite{LM22} and references therein). Therefore we expect that the change of vector bundles on elliptic surfaces via logarithmic transformation should be related to the change of derived categories. For example, we can perform logarithmic transformations twice to obtain an Enriques surface, and then we have the non-trivial fundamental group $\ZZ/2.$ It is well-known that representations of the fundamental group of a variety are closed related to the vector bundles living on it (cf. \cite{NS}). On the other hand, if there is a quasi-phantom category on the derived category of blown-up of this surface, then the Grothendieck group of the quasi-phantom category is $\ZZ/2.$ It is an interesting question whether these two phenomena are related.

\bigskip

\bibliographystyle{amsplain}

\begin{thebibliography}{99}

\bibitem{Atiyah} M. Atiyah. \emph{Vector bundles over an elliptic curve.} Proc. London Math. Soc. (3) 7 (1957), 414-452.

\bibitem{Bauer} S. Bauer. \emph{Parabolic bundles, elliptic surfaces and SU(2)-representation spaces of genus zero.} Fuchsian groups. Math. Ann. 290 (1991), no. 3, 509-526.

\bibitem{BN} K. Behrend and B. Noohi. \emph{Uniformization of Deligne-Mumford curves.} Journal f\"ur die reine und angewandte Mathematik (Crelles Journal), vol. 2006, no. 599, 2006, 111-153.

\bibitem{Biswas} I. Biswas. \emph{Parabolic bundles as orbifold bundles.} Duke Math. J. 88(2), 1997, 305-325.

\bibitem{Cho} Y. Cho. \emph{Exceptional collections on nonminimal Enriques surfaces.} Proc. Amer. Math. Soc. 150 (2022), 5-14.

\bibitem{CL} Y. Cho and Y. Lee. \emph{Exceptional collections on Dolgachev surfaces associated with degenerations.} Adv. Math. 324 (2018), 394-436.

\bibitem{FM} R. Friedman and J. W. Morgan. Smooth four-manifolds and complex surfaces. Ergebnisse der Mathematik und ihrer Grenzgebiete (3) [Results in Mathematics and Related Areas (3)], 27. Springer-Verlag, Berlin, 1994. x+520 pp.

\bibitem{Hall} J. Hall. \emph{Generalizing the GAGA Principle.} Preprint, arXiv:1101.5123.

\bibitem{HL} D. Huybrechts and M. Lehn. The geometry of moduli spaces of sheaves. Second edition. Cambridge Mathematical Library. Cambridge University Press, Cambridge, 2010. xviii+325 pp.

\bibitem{LM22} K.-S. Lee and H.-B. Moon. \emph{Derived category and ACM bundles of moduli space of vector bundles on a curve.} Preprint, arXiv:2201.10033.

\bibitem{Lurie} J. Lurie. \emph{Tannaka duality for geometric stacks.} Preprint, arXiv:math/0412266.

\bibitem{NS} M. S. Narasimhan and C. S. Seshadri. \emph{Stable and unitary vector bundles on a compact Riemann surface}. Annals of Mathematics, 82(3), 540-567. 

\bibitem{Olsson} M. Olsson. Algebraic spaces and stacks. American Mathematical Society Colloquium Publications, 62. American Mathematical Society, Providence, RI, 2016. xi+298 pp.

\bibitem{Serre} J.-P. Serre. \emph{Géométrie algébrique et géométrie analytique.} Université de Grenoble. Annales de l’Institut Fourier 6: 1-42, (1956).

\bibitem{Varma} R. Varma. \emph{On Higgs bundles on elliptic surfaces.} The Quarterly Journal of Mathematics, Volume 66, Issue 3, September 2015, 991-1008.

\bibitem{Zube} S. Zube. \emph{Logarithmic transformation of an elliptic surface and vector bundles.} (Russian) Izv. Akad. Nauk SSSR Ser. Mat. 55 (1991), no. 3, 466-482; translation in Math. USSR-Izv. 38 (1992), no. 3, 455-469.

\end{thebibliography}

\end{document}